\newcommand{\ga}{\alpha}
\newcommand{\gb}{\beta}
\renewcommand{\gg}{\gamma}
\newcommand{\gd}{\delta}
\newcommand{\gw}{\omega}
\newcommand{\gS}{\Sigma}
\newcommand{\gs}{\sigma}
\newcommand{\eps}{\varepsilon}
\newcommand{\R}{\mathbb{R}}
\newcommand{\cantor}{2^\gw}
\newcommand{\baire}{\gw^\gw}
\newcommand{\bintree}{2^{<\gw}}
\newcommand{\dotkgen}{\dot K_{\mathit{gen}}}
\newcommand{\coll}{\mathrm{Coll}}
\newcommand{\rng}{\mathrm{rng}}
\newcommand{\power}{\mathcal{P}}
\newcommand{\pioneoneonsigmaoneone}{${\mathbf{\Pi}}^1_1$ on ${\mathbf{\gS}}^1_1$}
\newtheorem{theorem}{Theorem}[section]
\newtheorem{claim}[theorem]{Claim}
\newtheorem{corollary}[theorem]{Corollary}
\newtheorem{fact}[theorem]{Fact}
\newtheorem{proposition}[theorem]{Proposition}
\theoremstyle{definition}
\newtheorem{definition}[theorem]{Definition}
\newtheorem{example}[theorem]{Example}
\newtheorem{question}[theorem]{Question}
\title{Cardinal invariants of closed graphs\footnote{2000 AMS subject classification 03E15, 03E17, 05C15.}}
\author{
Francis Adams
\\University of Florida
\and
Jind{\v r}ich Zapletal
\thanks{The authors were supported by NSF grant DMS  DMS 1161078.}
\\University of Florida
}
\begin{document}
\maketitle

\begin{abstract}
We study several cardinal characteristics of closed graphs $G$ on compact metrizable spaces. In particular, we address the question when it is consistent for the bounding number to be strictly smaller than the smallest size of a set not covered by countably many compact $G$-anticliques. We also provide a descriptive set theoretic characterization of the class of analytic graphs with countable coloring number.
\end{abstract}

\section{Introduction}

The theory of Borel and analytic graphs on Polish spaces is currently a fast growing field \cite{miller:graphs, marks:combinatorics}. In this paper, we contribute to the study of cardinal invariants associated with such graphs. Consider the following invariant:

\begin{definition}
Let $G$ be a graph on a Polish space $X$.

\begin{enumerate}
\item A set $A\subset X$ is a $G$-\emph{anticlique} if no two distinct points of $A$ are $G$-connected;
\item $\kappa(G)$ is the minimum cardinality of a subset of $X$ which is not covered by countably many compact $G$-anticliques.
\end{enumerate}

\noindent If the whole space $X$ is covered by countably many compact anticliques, then let $\kappa(G)=\infty$. 
\end{definition}

Clearly, $\kappa(G)$ is just the uniformity of the $\gs$-ideal generated by compact $G$-anticliques. We consider the problem of comparing the invariant $\kappa(G)$ for various closed graphs $G$ to the standard cardinal invariant $\mathfrak{b}$, the minimum cardinality of a subset of $\baire$ which cannot be covered by countably many compact subsets of $\baire$. While the problem may sound somewhat arbitrary, it in fact connects in an elegant way with various known combinatorial and descriptive set theoretic problems.

\begin{question}
\label{motivatingquestion}
Characterize those closed graphs $G$ for which the inequality $\mathfrak{b}<\kappa(G)$ is consistent with ZFC.
\end{question}

\noindent In order to resolve this question, we introduce another cardinal invariant, the \emph{loose number} of a topological graph (Definition~\ref{loosedefinition}), and prove the main result of the paper:

\begin{theorem}
\label{main1theorem}
Let $G$ be a closed graph on a compact metrizable space $X$. If $G$ has countable loose number, then in some generic extension
$\mathfrak{b}<\kappa(G)$ holds.
\end{theorem}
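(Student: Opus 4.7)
The plan is to start from a model of CH and construct a countable-support iteration of length $\atwo$ whose iterands are proper $\gw^\gw$-bounding forcings, each covering a prescribed $\aone$-sized subset of $X$ from the intermediate model by countably many compact $G$-anticliques. Since the whole iteration then remains $\gw^\gw$-bounding, no dominating reals are added and $\mathfrak{b}=\aone$ holds in the final extension; a bookkeeping over the $\aone$-sized subsets of $X$ ensures $\kappa(G)\ge\atwo$, yielding the desired consistency $\mathfrak{b}<\kappa(G)$.

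First, I would design the iterand $Q_A$ for a fixed $\aone$-sized $A\subseteq X$. Its conditions are tree-like, fusion-style approximations to a sequence $(K_n)_{n\in\gw}$ of compact $G$-anticliques with $A\subseteq\bigcup_n K_n$. The assumption that $G$ has countable loose number is what makes such approximations coherent and strengthenable: it should yield a countable decomposition of $G$ along which each point of $A$ can be assigned a target compact anticlique, with finite data recording the assignment together with a shrinking basic neighborhood in $X$.

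Second, I would verify that $Q_A$ is proper and $\gw^\gw$-bounding. Properness should follow from a fusion argument that uses compactness of $X$ to glue any descending sequence of conditions into a lower bound. The $\gw^\gw$-bounding property is where the loose hypothesis does the real work: countable looseness should translate into uniform finite branching bounds along the fusion tree, so that any name for an element of $\gw^\gw$ can be captured by a ground-model bound. A standard preservation theorem then guarantees that the countable-support iteration of such forcings is itself proper and $\gw^\gw$-bounding, so $\mathfrak{b}=\aone$ persists to the final model.

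Third, a standard bookkeeping argument closes the construction: at stage $\ga<\atwo$ of the iteration, pick an $\aone$-sized subset $A_\ga$ of $X$ from the intermediate model, arranging (using that each intermediate model satisfies $2^{\aleph_0}=\aone$) that every $\aone$-sized subset of $X$ in the final model appears as some $A_\ga$. It is then covered by countably many compact $G$-anticliques added at stage $\ga$, so $\kappa(G)\ge\atwo$. The main obstacle is the design of $Q_A$ and the verification of $\gw^\gw$-boundingness: this is precisely where the loose-number hypothesis must be translated into concrete finite branching bounds on the fusion tree, and where compactness of $X$ must interact delicately with the graph structure to guarantee that the limiting anticliques are closed.
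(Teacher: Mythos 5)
Your outline defers the entire mathematical content of the theorem to the design of the iterand $Q_A$ and to the verification that it is proper and $\omega^\omega$-bounding, and neither step is routine; in fact there is reason to doubt both. The paper never produces a single forcing that covers a prescribed uncountable set by countably many compact $G$-anticliques: its single-step poset $P_G$ adds \emph{one} generic compact anticlique, and a given small set is swept up only after $\omega$ further iterations by a density argument. Nothing in the countable-loose-number hypothesis is shown (or known) to yield ``uniform finite branching bounds along a fusion tree''; what it actually yields, via Theorem~\ref{equitheorem}, is that $P_G$ is $\sigma$-liminf-centered, hence c.c.c.\ and adding no dominating real --- a strictly weaker conclusion than $\omega^\omega$-bounding, obtained by a completely different mechanism (liminf-centered pieces indexed by ``good sequences'' of basic open sets, exploiting compactness of $X$ and looseness of the cover). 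Note also that your scheme, if it worked, would produce a model with $\aleph_1=\mathfrak{b}<\kappa(G)$, which the paper explicitly poses as an open question immediately after its proof of Theorem~\ref{main1theorem}; the paper itself only achieves $\mathfrak{b}\le\kappa<\lambda\le\kappa(G)$ for a regular $\kappa>\omega_1$, because its preservation theorem (Theorem~\ref{btheorem}) only handles unbounded sequences of regular length greater than $\omega_1$. That standard fusion and preservation arguments would hand you a strictly stronger theorem is a warning sign.

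A second concrete gap: at stage $\alpha$ you must build $Q_{A_\alpha}$ inside the intermediate model, and any construction from the loose-number hypothesis needs that hypothesis to hold \emph{there}. Looseness of a set quantifies over all points of $X$, and $X$ acquires new points in the extension, so a ground-model cover of $X$ by countably many loose sets need not survive; the paper flags exactly this as the central obstacle of Section~\ref{iterationsection} and circumvents it only because its iterand is a single Suslin c.c.c.\ forcing, for which c.c.c.\ and ``adds a dominating real'' are absolute across extensions (Fact~\ref{abso1fact} and Theorem~\ref{btheorem}, the latter proved via a $\mathbf{\Sigma}^1_2$ analysis and a Kuratowski decomposition). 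Your iterands depend on the sets $A_\alpha$ and are not reinterpretations of one Suslin poset, so no such absoluteness is available to you. For contrast, the paper's actual route is: prove $P_G$ is $\sigma$-liminf-centered when $\lambda(G)\le\aleph_0$, deduce c.c.c.\ plus preservation of unbounded families, transfer these properties to all extensions by Suslin absoluteness, add an unbounded scale of length $\kappa$ with $\omega_1<\kappa<\lambda$, and then run a finite-support iteration of $P_G$ of length $\lambda$, covering each small set by the $\omega$ generic anticliques added at the next $\omega$ stages.
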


It may seem that the theorem just replaces one difficult concept with another. However, plenty of information is available on the loose number. It sits neatly between the known combinatorial characteristics of the graph, the chromatic and coloring numbers by Theorem~\ref{betweentheorem}. This immediately yields many informative examples such as locally countable graphs and acyclic graphs; the pre-existing work of \cite{schmerl:list, komjath:list} provides some other natural examples connected with Euclidean spaces.

Fully characterizing the closed or analytic graphs with countable chromatic or loose numbers seems to be a very difficult problem. However, in the case of coloring number, there is a full characterization and a minimal analytic graph of uncountable coloring number:

\begin{theorem}
\label{main2theorem}
There is a closed graph $G_1$ on a Polish space such that for every analytic graph $G$ on a Polish space $X$, exactly one of the following occurs:

\begin{enumerate}
\item $G$ has countable coloring number;
\item there is a continuous injective homomorphism of $G_1$ to $G$.
\end{enumerate}
\end{theorem}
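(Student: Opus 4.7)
The plan is to prove a dichotomy in the spirit of the Kechris-Solecki-Todorcevic $G_0$-dichotomy, adapted from chromatic number to coloring number. First, I would fix a specific closed graph $G_1$ on a closed $X_1 \subseteq \baire$, engineered so that every vertex has infinitely many $G_1$-neighbors clustering into each of its open neighborhoods. A natural recipe uses a dense sequence $(s_n)_{n \in \gw}$ in $\gwtree$ with $|s_n|=n$ and connects pairs that share a common history through some $s_n$ and differ at position $n$ by a single ``$0$ versus positive'' coordinate jump. A Baire category argument on $X_1$ then shows $G_1$ has uncountable coloring number.

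The direction (1) $\Rightarrow \neg$(2) is immediate: countable coloring number passes to induced subgraphs, and a continuous injective homomorphism $\phi : G_1 \to G$ realizes $G_1$ as an induced subgraph of $G$ on $\phi(X_1)$, so countable coloring of $G$ would transfer to $G_1$, contradicting uncountable coloring number of $G_1$.

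For the substantive direction $\neg$(1) $\Rightarrow$ (2), I would run a transfinite derivative on closed (or more generally analytic) subsets of $X$. The derivative removes the ``peelable'' points: those possessing a basic-open witness in which only finitely many $G$-neighbors survive. Iterating $W_0 = X$, $W_{\ga+1} = D(W_\ga)$, taking intersections at limits, the process stabilizes at some countable ordinal at a kernel $W_\infty = D(W_\infty)$. If $W_\infty = \emptyset$, combining the resulting rank function on $X$ with a countable base of the topology produces, after combinatorial bookkeeping, a well-ordering of $X$ in which every vertex has only finitely many earlier $G$-neighbors; this witnesses countable coloring number of $G$, contradicting our hypothesis. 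Hence $W_\infty \ne \emptyset$, and by stabilization every point of $W_\infty$ has infinitely many $G$-neighbors in $W_\infty$ in every one of its open neighborhoods.

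Given a non-empty kernel $W_\infty$, the desired continuous injective homomorphism $\phi : X_1 \to W_\infty$ is built by a standard fusion on the tree underlying $X_1$: recursively assign to each node $s$ a non-empty relatively open $U_s \subseteq W_\infty$ of diameter at most $2^{-|s|}$, with the $U_s$'s nesting and with any $G_1$-edge appearing among the children of $s$ realized by a $G$-edge between distinct $U_{s'}, U_{s''}$; the kernel property of $W_\infty$ supplies enough $G$-neighbors at each step. The main technical obstacle is keeping the derivative well-behaved in the analytic case: each $W_\ga$ must remain analytic throughout the transfinite iteration. For closed $G$ this is immediate. For analytic $G$ it requires an unfolding via a tree representation of $G$, or an appeal to effective descriptive set theory in the style of Harrington-Kechris-Louveau; once this is secured, the rest is routine.
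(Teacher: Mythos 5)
Your choice of $G_1$ is the first problem. A $G_0$-style graph built from a dense sequence of nodes is locally countable: each vertex picks up at most countably many neighbours, one batch for each node $s_n$ it passes through. The paper itself observes that every locally countable graph has \emph{countable} coloring number (enumerate each component and orient edges toward the vertex of smaller index), so such a graph cannot be the minimal obstruction. The Baire category argument you cite is the standard proof that $G_0$ has uncountable \emph{Borel chromatic} number; it says nothing about the coloring number, which quantifies over arbitrary, choice-built orientations. The correct minimal obstruction must realize the configuration ``a countable set $a$ together with uncountably many vertices each adjacent to infinitely many members of $a$,'' which is provably incompatible with countable coloring number by a pigeonhole/elementary submodel argument. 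The paper's $G_1$ is exactly this configuration: the disjoint union of $2^{<\gw}$ (discrete) and $\cantor$, with $y\in\cantor$ joined to every restriction $y\restriction n$. Topological clustering of neighbours, by contrast, is neither necessary nor sufficient for uncountable coloring number.

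The second problem is the derivative, and it is fatal as stated: coloring number is not a local notion, so peeling $x$ when some basic open neighbourhood of $x$ contains only finitely many surviving $G$-neighbours cannot detect the obstruction. Take the closed graph $K_{\gw,\gw_1}$ on $X=\gw\sqcup Z$ ($\gw$ discrete, $Z$ uncountable Polish, all cross edges present): every vertex has a clopen neighbourhood containing none of its neighbours, so your derivative empties the space in one step, yet the coloring number is uncountable. Indeed your own derivative would peel the paper's $G_1$ itself in one step, since every $y\in\cantor$ has all its neighbours in the discrete part. The paper instead pivots on the non-topological condition (2) of Theorem~\ref{maintheorem}: countable coloring number is equivalent to ``for every countable $a\subset X$, only countably many $x$ satisfy $\exists^\infty y\in a\ (x\mathrel{G}y)$.'' One direction is an elementary submodel argument (with a collapse of $X\cap M$ and Mostowski absoluteness to handle submodels of arbitrary size, followed by a filtration to build the orientation); the other direction takes a countable $a$ witnessing the failure of (2), extracts a perfect subset of the analytic set $B=\{x\colon\exists^\infty y\in a\ x\mathrel{G}y\}$, and runs a Cantor scheme in which each perfect piece is shrunk via the perfect set theorem so that \emph{all} of its points are adjacent to a single fixed $x_t\in a$. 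Because the edges land on fixed points of $a$ rather than being edges ``between open sets'' that must survive a limit, the analytic case needs no unfolding or effective machinery. Your direction (1)$\Rightarrow\neg$(2) is fine.
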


As for the anatomy of the paper, in Section~\ref{singlesection} we introduce the single step forcing to increase the cardinal invariant $\kappa(G)$; Section~\ref{combinatoricssection} connects its forcing properties with combinatorial properties of the graph $G$. Section~\ref{iterationsection} deals with the rather thorny question of iterating the single step forcing. In Section~\ref{examplesection}, we discuss numerous concrete examples. Section~\ref{dichotomysection} contains the proof of the dichotomy theorem for the coloring number of analytic graphs.

We use the standard set theoretic notation of \cite{jech:newset}. For a subset $A$ of a topological space $X$, the symbol $\bar A$ stands for its closure. If $t\in\bintree$ is a finite binary string, then the symbol $[t]$ stands for the basic clopen set $\{x\in\cantor\colon t\subset x\}$ of the Cantor space. The phrase ``large enough structure'' identifies the collection of all sets whose transitive closure has size $<2^{2^{\mathfrak{c}}}$, equipped with the membership relation. Our graphs are non-oriented and do not contain multiplicities or loops; i.e.\ a graph $G$ on a set $X$ is a symmetric relation on $X$ which has empty intersection with the diagonal. If the set $X$ is equipped with a Polish topology, we say that the graph is closed, analytic etc. if it is a closed or analytic relation of the Polish space $(X\times X)\setminus$the diagonal with the topology inherited from the product. An orientation of the graph $G$ is an antisymmetric relation $o$ on $X$ whose symmetrization is equal to $G$. The $o$-outflow of any element $x\in X$ is the set $\{y\in X\colon \langle x, y\rangle\in o\}$.

Many of the results of the present paper appeared in the first author's Ph. D. thesis \cite{adams:thesis}.

\section{Single step forcing}
\label{singlesection}

A forcing notion representing a natural try to increase the cardinal invariant $\kappa(G)$ has been known for quite some time to several authors \cite[Definition 3.3]{geschke:graphs}:

\begin{definition}
Let $X$ be a Polish space and $G$ a closed graph on it. The poset $P_G$ consists of all pairs $p=\langle a_p, o_p\rangle$ where
$a_p\subset X$ is a finite $G$-anticlique and $o_p\subset X$ is an open set containing $a_p$ as a subset. The ordering is defined by $q\leq p$ just in case $a_p\subset a_q$ and $o_q\subset o_p$.
\end{definition}

The poset $P_G$ has a canonical generic object: the closure $\dotkgen$ of the union of the sets $a_p$ where $p$ ranges over all conditions in the generic filter.

\begin{proposition}
Let $X$ be a Polish space and $G$ a closed graph on it. Then $P_G\Vdash\dotkgen\subset X$ is a compact $G$-anticlique.
\end{proposition}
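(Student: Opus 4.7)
I would work in the generic extension, letting $H$ denote the generic filter and setting $A = \bigcup_{p \in H} a_p$, so that $\dotkgen$ is interpreted as $\bar A$. Two properties must be established: that $\bar A$ is a $G$-anticlique, and that it is compact. I would handle the anticlique property first, since it is a standard density-plus-closedness argument, and then address compactness, which is the main obstacle and is the reason the open component $o_p$ is built into the notion of a condition in the first place.

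For the anticlique property: suppose, toward a contradiction, that $\bar A$ contains distinct $x, y$ with $\langle x, y \rangle \in G$. Since $G$ is closed in $(X \times X) \setminus \Delta$ and $X$ is Hausdorff, choose disjoint open neighborhoods $U \ni x$ and $V \ni y$ small enough that $U \times V \subset G$. By density of $A$ in its closure, pick $x' \in A \cap U$ and $y' \in A \cap V$; these come from some $a_{p_0}, a_{p_1}$ with $p_0, p_1 \in H$, and any common extension $r \in H$ has $\{x', y'\} \subset a_r$. But $x', y'$ are distinct (since $U \cap V = \emptyset$) and $G$-connected, contradicting that $a_r$ is a $G$-anticlique.

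For compactness, I would first observe a shrinking property of the open components: whenever $p, q \in H$ and $r \in H$ extends both, one has $a_q \subset a_r \subset o_r \subset o_p$, so $A \subset o_p$ for every $p \in H$. Fix a complete compatible metric $d$ on $X$. The key density claim is that, for every condition $p$ and every $n$, the pair $p' = \langle a_p,\, o_p \cap \bigcup_{x \in a_p} B_d(x, 2^{-n})\rangle$ is a legitimate condition below $p$ whose open component is contained in a finite union of $d$-balls of radius $2^{-n}$. By genericity, for each $n$ some such $p_n$ lies in $H$, so $A \subset o_{p_n}$ is covered by finitely many $2^{-n}$-balls, and $\bar A$ is therefore totally $d$-bounded. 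Since $\bar A$ is closed in the complete metric space $(X, d)$ it is itself complete, and completeness together with total boundedness yields compactness, completing the proof.
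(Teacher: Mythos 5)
Your compactness argument is essentially the paper's and is fine; indeed you are more careful than the paper in insisting on a \emph{complete} compatible metric, which is what lets you pass from total boundedness of the closed set $\bar A$ to compactness. The problem is the anticlique part, which has a genuine gap. You assert that since $\langle x,y\rangle\in G$ and $G$ is closed, there are neighborhoods $U\ni x$ and $V\ni y$ with $U\times V\subset G$. That is backwards: closedness of $G$ gives product neighborhoods \emph{avoiding} $G$ around pairs \emph{not} in $G$; a product neighborhood contained \emph{in} $G$ around a pair of $G$ would require $G$ to be open. Consequently the points $x'\in A\cap U$ and $y'\in A\cap V$ that you produce need not be $G$-connected, and no contradiction with $a_r$ being an anticlique arises.

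This is not a slip that density of $A$ in $\bar A$ can repair: for a closed graph, the closure of an anticlique need not be an anticlique. Two points $x,y\in\bar A\setminus A$ can be $G$-connected even though no two points of $A$ are, since closedness only guarantees that limits of edges are edges, not that limits of non-edges are non-edges (picture sequences $x_n\to x$, $y_n\to y$ with $\{x,y\}$ the only edge in sight). This is precisely why the open components $o_p$ are needed for the anticlique property as well, not only for compactness. The paper runs a second density argument: for each $\eps>0$, the set of conditions $p$ such that any two $G$-connected points of $\bar o_p$ lie at distance $<\eps$ is dense (shrink $o_p$ to pairwise $G$-disconnected small neighborhoods of the points of the finite anticlique $a_p$, using that $G$ is closed); since $\dotkgen\subset\bar o_p$ for every $p$ in the generic filter and the relevant property of $\bar o_p$ transfers to the extension by Mostowski absoluteness, no two distinct points of $\dotkgen$ can be $G$-connected. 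Some version of this argument is required; the one you give does not establish the anticlique property.
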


\begin{proof}
Let $d$ be a compatible metric for the space $X$. To show that the set $\dotkgen\subset X$ is forced to be compact, it will be enough to show that it is totally bounded. For each real number $\eps>0$ let $D_\eps=\{q\in P_G\colon$there is a number $n$ such that there is no collection of $n$ many points of $\bar o_q$ which are pairwise at distance greater than $\eps$ from each other$\}$.

\begin{claim}
The set $D_\eps\subset P_G$ is open dense.
\end{claim}

\begin{proof}
It is immediate that the set $D_\eps$ is open. For the density, given any condition $p\in P_G$, for each $x\in a_p$ select an open set $o_x$ containing $x$ of $d$-diameter $<\eps$, and let $q=\langle a_p, o_p\cap\bigcup_{x\in a_p}o_x\rangle\leq p$.
To see that the condition $q$ belongs to the set $D_\eps$, note that every set of points of pairwise distance $>\eps$ which is a subset of $\bar o_q$ can have size at most $|a_p|$.
\end{proof}

To show that $\dotkgen$ is forced to be totally bounded, note that for each $\eps>0$ there must be a condition $p\in D_\eps$ in the generic filter. Such a condition clearly forces that $\dotkgen\subset\bar o_q$ and therefore $\dotkgen$ cannot contain any infinite collection of points which are pairwise at distance greater than $\eps$ from each other.

Now we need to show that $\dotkgen$ is forced to be a $G$-anticlique. For every real number $\eps>0$ write $D_\eps=\{p\in P_G\colon$ any two points in $\bar o_p$ which are $G$-related must be at a distance less than $\eps$ of each other$\}$.

\begin{claim}
$D_\eps$ is open dense in $P_G$.
\end{claim}

\begin{proof}
It is immediate that the set $D_\eps$ is open. For the density, given any condition $p\in P_G$, use the fact that the graph $G$ is closed to find open neighborhoods $o_x$ of each point $x\in a_p$ which are pairwise disjoint, of diameter $<\eps$, and such that $x\neq y\in a_p$ implies $(o_x\times o_y)\cap G=0$. It is not difficult to see that the condition $q=\langle a_p, o_p\cap\bigcup_{x\in a_p}o_x\rangle\leq p$ belongs to the set $D_\eps$.
\end{proof}

Now suppose that $H\subset P_G$ is a generic filter and $x\neq y\in \dotkgen$ are distinct points, at a distance $>\eps$ from each other for some positive rational $\eps$. By the claim and a genericity argument, there is a condition $p\in P$ in the filter $H$ which belongs to the set $D_\eps$. Then, $\dotkgen\subset\bar o_p$, and by a Mostowski absoluteness argument between $V$ and $V[H]$, no two points of $\bar o_p$ (in particular, $x$ and $y$) which are at a distance greater than $\eps$ from each other can be $G$-connected. Since the points $x, y$ were arbitrary, this shows that the set $\dotkgen$ is forced to be a $G$-anticlique.
\end{proof}

The poset $P_G$ is uniquely qualified to resolve our motivating Question~\ref{motivatingquestion}. This follows from the following theorem, together with the absoluteness and iteration results of Section~\ref{iterationsection}:

\begin{theorem}
\label{uniquenesstheorem}
Let $G$ be a closed graph on a compact metrizable space $X$. If $\mathfrak{b}<\kappa(G)$, then the poset $P_G$ adds no dominating reals.
\end{theorem}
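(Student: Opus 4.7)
The plan is to argue by contrapositive. Suppose some condition $p_0\in P_G$ forces a name $\dot f$ for a function in $\baire$ to dominate every ground-model real; I will derive a contradiction with $\mathfrak{b}<\kappa(G)$ by exhibiting a $Y\subseteq X$ of size at most $\mathfrak{b}$ that cannot be covered by countably many compact $G$-anticliques.

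To each $y$ in the closed set $X_0:=\{y\in\bar o_{p_0}\colon a_{p_0}\cup\{y\}\text{ is a }G\text{-anticlique}\}$ (closed by closedness of $G$) I attach a function $g_y\in V\cap\baire$ recording the local forcing behaviour of $\dot f$ below the strengthening $p_0^y:=\langle a_{p_0}\cup\{y\},o_{p_0}\rangle$. A natural candidate is
$$g_y(n)\;:=\;\sup_{\eps>0}\,\min\{k\in\omega\colon \exists q\le p_0^y\text{ with }o_q\subseteq B(y,\eps)\text{ and }q\Vdash\dot f(n)=k\}.$$
Using compactness of $X$ and closedness of $G$ (in the spirit of the density arguments in Proposition~\ref{uniquenesstheorem}'s setup) I expect to verify that $g_y$ is finite-valued, Borel in $y$, and satisfies $g_y\leq^*\dot f[H]$ whenever $H$ is a $V$-generic through $p_0$ with $y\in\dotkgen^H$. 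Using an unbounded family $\{f_\alpha\colon\alpha<\mathfrak{b}\}\subseteq V\cap\baire$, I then pick $Y\subseteq X_0$ of size $\le\mathfrak{b}$ with $\{g_y\colon y\in Y\}$ unbounded in $V\cap\baire$.

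The hypothesis now provides a cover $Y\subseteq\bigcup_n K_n$ by compact $G$-anticliques, and $\sigma$-directedness of $\leq^*$ forces some $\{g_y\colon y\in K_n\}$ to be unbounded. The filter on $P_G$ generated by $K_n$---conditions $\langle F,U\rangle$ with finite $F\subseteq K_n$ and open $U\supseteq K_n$---is directed by the anticlique property of $K_n$ and extends to a $V$-generic $H$ with $K_n\subseteq\dotkgen^H$; in $V[H]$ the real $\dot f[H]$ dominates every $g_y$ for $y\in K_n$. Since the statement ``there exists $h\in\baire$ with $g_y\leq^*h$ for all $y\in K_n$'' is $\Sigma^1_2$ in the Borel parameters $K_n$ and the code of $y\mapsto g_y$, Shoenfield absoluteness pulls this witness back to $V$, contradicting the unboundedness.

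The main technical obstacle is the construction of $y\mapsto g_y$: verifying that it is finite-valued, Borel in $y$, and satisfies the forcing inequality whenever $y$ lands in $\dotkgen^H$, and crucially that its range is genuinely unbounded in $V\cap\baire$ (otherwise the $Y$-diagonalization cannot be seeded). The closed-graph hypothesis and compactness of $X$ are expected to supply the definability and approximability needed, while Shoenfield absoluteness bridges the generic and ground-model worlds to close the argument.
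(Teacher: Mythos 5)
Your overall instinct---that compactness of $X$ plus closedness of $G$ should let you convert ``many points lying in a common compact anticlique'' into a ground-model bound on the values the name can be forced to take, and then use an unbounded family of size $\mathfrak{b}$ to contradict domination---is the right one, and it is the engine of the paper's proof. But the way you have arranged the argument has several gaps that are not cosmetic.

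First, the finiteness of $g_y(n)$ is not a technicality you can defer: it is the entire content of the theorem, and as you have set things up it is false, or at least unprovable. Your $g_y$ is defined before any compact anticliques enter the picture, using only localization of $o_q$ near $y$. The paper's analogous object is the set $c_k$ of values forced by conditions of a fixed finite \emph{shape} $\langle P_i, O_i, K_i\rangle$, where the $K_i$ are compact $G$-anticliques taken from a countable family covering $X\cap M$ (this is where $\mathfrak{b}<\kappa(G)$ is used, via an elementary submodel $M$ of size $\mathfrak{b}$). The finiteness of $c_k$ is proved by taking infinitely many witnessing conditions $r_l$, passing to limit points $\hat x_i$ of their anticliques, and forming a condition $s$ that forces infinitely many $r_l$ into the generic filter; the crucial step is that $\hat x_i$ and $x_i^l$ are not $G$-connected \emph{because both lie in the closed anticlique $K_i$}. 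Mere metric proximity gives you nothing here, since a closed graph can connect arbitrarily close points; so without the anticliques $K_i$ your limit configuration need not be a condition, and the finiteness argument collapses. Your plan uses the cover by anticliques only \emph{after} $Y$ is built from the $g_y$, which is exactly the wrong order. (There is also a smaller defect: $o_q\subseteq B(y,\eps)$ is incompatible with $a_{p_0}\subseteq o_q$ once $\eps$ is small, and you do not fix the cardinality or location of $a_q$, which the paper's ``good sequences'' do and which the compactness argument needs.)

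Second, two forcing claims in your closing step are unjustified and in fact false in general. There is no generic filter $H$ with $K_n\subseteq\dotkgen^H$ for an uncountable compact anticlique $K_n$: the paper's own proof that $\dotkgen$ is compact shows that generically $\dotkgen$ admits finite $\eps$-nets of size $|a_q|$ for conditions $q$ in appropriate dense sets, so $\dotkgen$ cannot be forced to contain a prescribed uncountable compact set; a directed subset of $P_G$ does not extend to a generic filter containing it. And the inequality $g_y\leq^*\dot f[H]$ for $y\in\dotkgen^H$ does not follow from your definition: the condition in $H$ that decides $\dot f(n)$ has no reason to be among the localized conditions over which your minimum is taken. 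The paper sidesteps both issues entirely: it never needs to realize a whole anticlique generically, only to observe that the single condition $q$ (obtained from elementarity of $M$) witnesses $y(k)\in c_k$, and then reads off a dominating function $k\mapsto\max(c_k)$ inside a \emph{countable} elementary submodel $N$ containing the countable family of anticliques, contradicting the choice of $y$ as unbounded over $N$. I would encourage you to restructure along those lines: fix the cover of $X\cap M$ first, quantify over finitely many shapes built from it, and prove finiteness of the forced-value sets by the limit-of-conditions argument inside the anticliques.
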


\begin{proof}
Suppose towards a contradiction that $p\Vdash\dot z\in\baire$ modulo finite dominates all ground model elements of $\baire$ for some condition $p\in P_G$ and a $P_G$-name $\dot z$ for an element of $\baire$. Let $F\subset\baire$ be an unbounded set of size $\mathfrak{b}$. Let $M$ be an elementary submodel of a large enough structure of size $\mathfrak{b}$ containing $p, \dot z, F$ as elements and $F$ as a subset. Let $b$ be a countable set of compact $G$-anticliques such that $X\cap M\subset\bigcup b$. Let $N$ be a countable elementary submodel of a large structure containing $p, \dot z, b$ as elements. Let $y\in F$ be a function which is not modulo finite dominated by any element of $\baire\cap N$. By the elementarity of the model $M$, there is a condition $q\in P_G\cap M$ and a natural number $m$ such that $q\Vdash \forall k>m\ \dot z(k)>\check y(k)$.

Now, choose a one-to-one enumeration $a_q=\{ x_i\colon i\in j\}$ of the anticlique in the condition $q$. There must be $P_i, O_i, K_i$ for $i\in j$ such that

\begin{itemize}
\item $P_i, O_i$ are basic open subsets of $X$ and $K_i\in b$ are compact $G$-anticliques;
\item $o_q\supset P_i\supset \bar O_i$, the sets $\bar O_i$ are pairwise disjoint and $G$-disconnected;
\item $x_i\in O_i\cap K_i$.
\end{itemize}  

Note that this sequence of objects belongs to the model $N$. Now, for each number
$k>m$, consider the set $c_k$ of those numbers $l\in\gw$ such that for some condition $r_l\in P_G$,  $r_l\Vdash\dot z(k)>l$ holds, $\bigcup_i P_i\subset o_{r_l}$, and the set $a_{r_l}$ can be listed as $\{x_i^l\colon i\in j\}$ so that for each $i\in j$, $x_i^l\in O_i\cap K_i$ holds.
The key claim:

\begin{claim}
The set $c_k$ is finite.
\end{claim}

\begin{proof}
Suppose towards a contradiction that the set $c_k$ is infinite, and for each number $l\in c_k$ select a condition $r_l\in P_G$ witnessing the membership in $c_k$, and let $x_i^l$ for $i\in j$ denote the unique element of $a_r\cap O_i\cap K_i$.
Using the compactness of the space $X$ and thinning out the set $c_k$ if necessary, we may assume that the points $x_i^l$ converge to some $\hat x_i\in X$ for each $i\in j$. Note that as the $G$-anticliques $K_i$ are closed, $\hat x_i\in K_i$ holds.

Now, consider the condition $s\in P_G$ given by the demands $a_s=\{\hat x_i\colon i\in j\}$ and $o_s=\bigcup_{i\in j} P_i$. This is indeed a condition: the set $a_s$ is a $G$-anticlique by the choice of the basic open sets $O_i$. We will reach the contradiction by showing that $s$ forces infinitely many of the conditions $r_l$ into the generic filter. This is of course impossible since then $s$ forces that there is no value that the name $\dot z(k)$ can attain.

Suppose that $t\leq s$ is a condition and $\hat l$ is some natural number; we must find a number $l>\hat l$ and a lower bound of $t$ and $r_l$. To find this number $l$, use the fact that the graph $G$ is closed to find respective neighborhoods $\hat O_i$ of points $\hat x_i$ for $i\in j$ such that $\hat O_i\subset o_t$ and for all points $x\neq \hat x_i$, $x$ has no $G$-neighbors in the set $\hat O_i$. Find a number $l>\hat l$ so large that for each $i\in j$ the points $x_i^l$ belong to the sets $\hat O_i$ for each $i\in j$. Observe that the set $a_t\cup a_{r_l}$ then must be a $G$-anticlique: if $i\in j$ and $x\neq \hat x_i$ is a point in $a_t$, then $x_i^l$ is not $G$-connected to $x$ since $x_i^l\in\hat O_i$, and $x_i^l$ is not $G$-connected to $\hat x_i$ since both belong to the same compact anticlique $K_i$. It now follows immediately that $\langle a_t\cup a_{r_l}, o_t\cap o_{r_l}\rangle$ is a condition in the poset $P_G$ and a lower bound of $t, r_l$.
\end{proof}

Now note that the sequence $\langle c_k\colon k>m\rangle$ belongs to the model $N$ by elementarity, and so the model $N$ contains some function $z\in\baire$ such that for each $k>m$ returns a value larger than $\max(c_k)$. Note also that $y(k)\in c_k$ holds as the condition $q$ witnesses the membership. This means that the function $z\in N$ modulo finite dominates the function $y$, contradicting the choice of $y$.
\end{proof}

\section{Combinatorics}
\label{combinatoricssection}

Theorem~\ref{uniquenesstheorem} does not shed any light on how to actually evaluate the critical forcing properties of the poset $P_G$ in any specific case. It turns out though that the forcing properties of the poset $P_G$ faithfully reflect certain combinatorial cardinal invariants of the graph $G$. In order to state the interesting correspondence theorem, we must introduce the relevant invariants and forcing features.

\begin{definition}
\textnormal{\cite{hajnal:chromatic}}
Let $G$ be a graph on a set $X$. The \emph{chromatic number} $\chi(G)$ of the graph $G$ is the smallest cardinality of a collection of $G$-anticliques covering the space $X$.
\end{definition}

\noindent If the set $X$ is equipped with a topology and the graph $G$ is open then the closure of any anticlique is again an anticlique. However, in most interesting closed graphs, anticliques cannot be in general enclosed by closed, Borel, or analytic anticliques. Thus, constructing a cover of the space by $G$-anticliques becomes a process in which the axiom of choice must be considered.

\begin{definition}
\label{loosedefinition}
Let $G$ be a graph on a topological space $X$. A $G$-\emph{loose set} is a set $A\subset X$ such that for every point $x\in X$ there is an open neighborhood $O$ of $x$ containing no elements of the set $A$ which are $G$-connected to $x$. The \emph{loose number} $\lambda(G)$ of the graph $G$ is the smallest cardinality of a collection of $G$-loose sets covering the space $X$.
\end{definition}

\noindent A rather primitive example of a $G$-loose set is a closed $G$-anticlique. Not every $G$-loose set needs to be an anticlique, but every $G$-loose set is a union of countably many anticliques. On the other hand, one can find graphs in which there are $G_\gd$-anticliques which are not unions of countably many $G$-loose sets, see Example~\ref{easyexample}.

The last relevant cardinal invariant of a graph $G$ is the coloring number. There are several equivalent ways to define it. To shorten the arguments, we use a definition which may give different values from others in the case these values are finite. This wrinkle is inconsequential for this paper.

\begin{definition}
\textnormal{\cite{hajnal:chromatic}}
Let $G$ be a graph on a set $X$. The \emph{coloring number} $\mu(G)$ of $G$ is the smallest cardinal $\kappa$ such that there is an orientation of the edges of $G$ such that the outflow of each vertex has size $<\kappa$.
\end{definition}

\noindent A good example of a graph with countable coloring number is a locally countable graph. To construct the orientation, choose an enumeration of each connected component by natural numbers, and within the component, orient the edges towards the vertex with a smaller index in that enumeration. Another example of a graph with countable coloring number (in fact, coloring number equal to $2$) is an acyclic graph. In each of it connected components, choose a single vertex and orient the edges within the component towards the chosen vertex.

While the chromatic and coloring numbers have been studied for many years, the loose number is a new concept. Note that unlike the chromatic and coloring numbers it depends on the topology of the underlying space. The following theorem shows the important implications among the three concepts.

\begin{theorem}
\label{betweentheorem}
Let $G$ be a graph on a Polish space $X$. Then $\mu(G)\leq\aleph_0$ implies $\lambda(G)\leq\aleph_0$, which in turn implies $\chi(G)\leq\aleph_0$.
\end{theorem}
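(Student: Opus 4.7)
My plan is to handle the two implications separately. For the second, $\lambda(G)\le\aleph_0\Rightarrow\chi(G)\le\aleph_0$, I would appeal to the assertion made just before the theorem: every $G$-loose set is a union of countably many anticliques. Given a loose set $A$ and a countable base $\{U_n:n\in\gw\}$ of $X$, set $A_n=\{x\in A\cap U_n:A\cap U_n\text{ contains no }G\text{-neighbor of }x\}$. Looseness provides, for each $x\in A$, an open $O\ni x$ with $A\cap O$ containing no $G$-neighbor of $x$, and any basic $U_n$ with $x\in U_n\subseteq O$ places $x$ in $A_n$. Each $A_n$ is an anticlique: if $x\ne y$ in $A_n$ were $G$-adjacent, then $y\in A\cap U_n$ would be a $G$-neighbor of $x$, contradicting the defining property of $A_n$. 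Refining each loose set in a countable loose cover of $X$ this way yields a countable anticlique cover.

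For the main implication $\mu(G)\le\aleph_0\Rightarrow\lambda(G)\le\aleph_0$, I would fix an orientation $o$ of $G$ with finite outflow and a countable base $\{U_n\}$ of $X$, then define a countable partition of $X$ indexed by finite tuples of basic open sets. For $\vec V=(V_1,\dots,V_k)$ such a tuple, put
\[A_{\vec V}=\{x\in X:o(x)\text{ enumerates as }\{y_1,\dots,y_k\}\text{ with }y_i\in V_i\text{ and }x\notin V_i\text{ for all }i\}.\]
Coverage $X=\bigcup_{\vec V}A_{\vec V}$ is a direct Hausdorff separation: given $x$ with finite outflow $\{y_1,\dots,y_k\}$, each $y_i\ne x$, so I can choose basic $V_i\ni y_i$ with $x\notin V_i$ and then $x\in A_{(V_1,\dots,V_k)}$.

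What remains is to check each $A_{\vec V}$ is $G$-loose. Given $z\in X$, I would set $I=\{i\le k:z\in V_i\}$ and $O_1=\bigcap_{i\in I}V_i$, a neighborhood of $z$. The key combinatorial observation is that if some $x\in A_{\vec V}\cap O_1$ had $z$ in its outflow, then $z=y_j$ for some $j$, forcing $j\in I$ and hence $x\in O_1\subseteq V_j$, which contradicts $x\notin V_j$. Then, since $o(z)$ is finite and avoids $z$, I shrink $O_1$ further to a neighborhood $O$ of $z$ disjoint from $o(z)$, ruling out the other direction of potential $G$-edges. The main obstacle I anticipate is identifying the right parameter for the partition: looseness is a condition enforced at \emph{every} point of $X$, not just inside the set, so partitioning by outflow size alone cannot work, and the tuple $\vec V$ must record exactly how each outflow neighbor is separated from the vertex by a basic open.
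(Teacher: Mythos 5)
Your proof is correct, and the two implications deserve separate comments. The second implication ($\lambda(G)\leq\aleph_0\Rightarrow\chi(G)\leq\aleph_0$) is essentially the paper's argument: the paper refines each loose set $A_n$ via a choice function $f_n(x)=U_n$ picking a basic witness to looseness at $x$ and partitions by the value of $f_n$, whereas you define the piece $A_n$ directly by the property ``$A\cap U_n$ contains no $G$-neighbor of $x$''; the anticlique verification is the same. For the first implication your decomposition is genuinely different from the paper's. The paper attaches to each vertex $x$ a \emph{single} basic open set $O\ni x$ whose \emph{closure} misses the finite $o$-outflow of $x$, sets $A^O=\{x\colon f(x)=O\}$ (so $A^O\subset O$), and proves looseness by contradiction: a bad limit point $y$ would lie in $\bar O$, hence cannot be in the outflow of any $x\in A^O$, while a small neighborhood of $y$ missing the finite outflow of $y$ rules out the other orientation. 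You separate in the opposite direction, surrounding each outflow element $y_i$ of $x$ by a basic $V_i\ni y_i$ with $x\notin V_i$ and indexing the pieces by the finite tuple $\vec V=(V_1,\dots,V_k)$. This costs a countable index set of finite tuples rather than single basic sets, but it buys a direct (non-contradiction) verification of looseness and dispenses with closures entirely; both arguments ultimately reduce to the same two-case analysis of how a potential edge $\{x,z\}$ could be oriented, one case handled by the chosen neighborhoods and the other by shrinking a neighborhood of $z$ to avoid its own finite outflow. Your proof is complete as written.
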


The implications cannot be reversed even in the case of closed graphs on Polish spaces, which is clear from the examples in Section~\ref{examplesection}. The theorem makes the loose number look suspiciously close to the list-chromatic number, but this is a red herring: Corollary~\ref{listcorollary} below shows that for analytic graphs $G$, the list-chromatic number is countable just in case its coloring number is countable, and therefore the list-chromatic number is not useful for the present discussion.

\begin{proof}
Suppose first that the coloring number of the graph $G$ is countable. Let $o$ be an orientation of the edges of $G$ such that the $o$-outflow of any point $x\in X$ is finite. To each point $x\in X$ assign a basic open set $f(x)\subset X$ containing $x$ such that its closure contains none of the points in the finite $o$-outflow of $x$. For every basic open set $O\subset X$ let $A^O=\{x\in X\colon f(x)=O\}$.

\begin{claim}
The set $A^O$ is $G$-loose.
\end{claim}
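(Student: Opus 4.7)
The plan is to unpack the definition of loose directly. Fix an arbitrary $x \in X$. I need to produce an open neighborhood $U$ of $x$ so that no element of $U \cap A^O$ is $G$-connected to $x$. I will split on whether $x \in \bar O$ or not.

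If $x \notin \bar O$, the neighborhood $U = X \setminus \bar O$ already does the job, because $A^O \subseteq O \subseteq \bar O$, so $U \cap A^O = \emptyset$. This case is essentially free and does not use the orientation at all.

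The interesting case is $x \in \bar O$. Here I will show that every $y \in A^O$ which is $G$-connected to $x$ must lie in the $o$-outflow of $x$. Indeed, the orientation $o$ points either $\langle x,y\rangle \in o$ or $\langle y,x\rangle \in o$. In the latter sub-case, $x$ sits in the $o$-outflow of $y$; but by the defining property of $f(y) = O$, the closure $\bar O = \overline{f(y)}$ avoids the $o$-outflow of $y$, contradicting $x \in \bar O$. So only the orientation $\langle x,y\rangle \in o$ survives, placing $y$ in the $o$-outflow of $x$. Since that outflow is finite, only finitely many points $y_1,\ldots,y_n \in A^O$ are $G$-connected to $x$, and since $X$ is Hausdorff, I can pick an open neighborhood $U$ of $x$ missing all of them.

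This establishes the claim. There is no serious obstacle: the entire argument is a bookkeeping exercise on the two possible orientations of a $G$-edge from $x$ to $y \in A^O$, combined with the fact that $f(y) = O$ ties the closure $\bar O$ to the $o$-outflow of $y$. The mild subtlety worth flagging is that the definition of loose only requires separating $x$ from its $G$-neighbors inside $A^O$, not separating $x$ from $A^O$ itself; this is precisely why the finiteness of each outflow, rather than any countability assumption on $A^O$, suffices.
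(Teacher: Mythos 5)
Your proof is correct and follows essentially the same route as the paper: the heart of both arguments is that an edge between a point of $\bar O$ and a point $y\in A^O$ cannot be oriented out of $y$ (since $\overline{f(y)}=\bar O$ avoids the outflow of $y$), so all $G$-neighbors of $x$ inside $A^O$ lie in the finite outflow of $x$ and can be separated off by an open neighborhood. The paper phrases this as a contradiction at a hypothetical limit point while you argue directly with an explicit case split on $x\in\bar O$, but the two key observations are identical.
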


\begin{proof} 
If this failed, there would be a point $y\in X$ such that each neighborhood of $y$ contains some point of $A^O$ not equal to $y$ and $G$-connected to $y$. In particular, $y\in\bar A^O\subset\bar O$. Let $P$ be an open neighborhood of $y$ containing none of the points in the finite $o$-outflow of $y$, and let $x\in A^O\cap P$ be a point $G$-connected to $y$. The edge $\{ x, y\}\in G$
cannot be oriented towards the point $x$ by the choice of the neighborhood $P$. It also cannot be oriented towards the point $y$ since $y\in\bar O$ and no points in $\bar O$ are in the outflow of the point $x$. A contradiction.
\end{proof}

\noindent Clearly, the $G$-loose sets $A^O$, as $O$ varies over some fixed countable basis of the space $X$, cover the whole  space, and so the loose number of the graph $G$ is countable.

Now, suppose that the loose number of $G$ is countable, and let $X=\bigcup_nA_n$ be a cover of the space $X$ by countably many $G$-loose sets. For each $n\in\gw$ and each point $x\in X$, let $f_n(x)\subset X$ be some basic open set containing $x$ and no points of $A_n$ which are $G$-connected to $x$. For every $n\in\gw$ and every basic open set $O\subset X$, let $A_{n}^O=\{x\in A_n\colon f_n(x)=O\}$.

\begin{claim}
The set $A_{n}^O$ is a $G$-anticlique.
\end{claim}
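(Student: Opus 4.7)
The plan is to unpack definitions directly: the claim should follow immediately from how $f_n$ and $A_n^O$ were defined, with no additional machinery needed. I expect this to be the easiest step in the whole theorem.

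Take two distinct points $x, y \in A_n^O$; I need to show that $\{x, y\} \notin G$. By definition of $A_n^O$, both $x$ and $y$ lie in $A_n$, and $f_n(x) = f_n(y) = O$. Since $f_n(x)$ is a basic open set containing $x$, we have $x \in O$, and similarly $y \in O$. Now recall the defining property of $f_n(x)$: the open set $f_n(x) = O$ contains no elements of $A_n$ which are $G$-connected to $x$. But $y \in A_n \cap O$ and $y \neq x$, so by this property $y$ cannot be $G$-connected to $x$. This exhausts the two points, so $A_n^O$ is a $G$-anticlique.

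There is no real obstacle here; the symmetry in $x, y$ is automatic because the argument uses only the property of $f_n(x)$ (one could equally well have used $f_n(y)$). After the claim is established, the theorem is finished by noting that as $O$ ranges over a fixed countable basis of $X$ and $n$ ranges over $\gw$, the sets $A_n^O$ form a countable cover of $X$ by $G$-anticliques: given any $x \in X$, pick $n$ with $x \in A_n$ and set $O = f_n(x)$, whence $x \in A_n^O$. This yields $\chi(G) \le \anought$, completing the second implication of the theorem.
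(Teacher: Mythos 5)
Your proof is correct and follows exactly the paper's own argument: both points lie in $A_n\cap O$ because $f_n(x)=f_n(y)=O$ is a basic open set containing each of them, and the defining property of $f_n(x)$ rules out any $G$-edge from $x$ to a point of $A_n\cap O$. Nothing is missing; the extra remarks about symmetry and the concluding countable cover match the paper's wrap-up as well.
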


\begin{proof}
Suppose that $x\neq y$ are distinct points in the set $A_n^{O}$. Both of the points belong to both $A$ and $O$. By the definition of the function $f_n$, no elements of $A\cap O$ are connected to $x$, in particular $y$ is not connected to $x$. The claim follows.
\end{proof}

\noindent Clearly, for each number $n\in\gw$ the sets $A_{n}^O$ cover the set $A_n$ as $O$ varies over some fixed countable basis of the space $X$. Thus, $X=\bigcup_{n,O}A_{n}^O$ is a countable cover of the whole space by countably many $G$-anticliques and so the chromatic number of $G$ is countable.
\end{proof}

The forcing properties of the poset $P_G$ connected to the combinatorial concepts listed above are the following:

\begin{definition}
Let $\langle P, \leq\rangle$ be a partial ordering. Let $A\subset X$. The set $A$ is \emph{centered} if for every finite set $b\subset A$ there is a condition $q\in P$ such that for every $p\in b$, $q\leq p$. The poset $P$ is $\gs$-\emph{centered} if it can be written as a union of countably many centered sets.
\end{definition}

\begin{definition}
Let $\langle P, \leq\rangle$ be a partial ordering. Let $A\subset X$. The set $A$ is \emph{liminf centered} if for every sequence
$\langle p_i\colon i\in\gw\rangle$ of elements of $A$, there is a condition $q\in P$ such that for every $r\leq p$ the set $\{i\in\gw\colon p_i$ is compatible with $r\}$ is infinite. The poset $P$ is $\gs$-\emph{liminf-centered} if it can be written as a union of countably many liminf centered sets.
\end{definition}

\noindent Both $\gs$-centeredness and $\gs$-liminf-centeredness imply c.c.c.\ since no centered set can contain two incompatible elements, and no liminf-centered set can contain an infinite antichain. In general, there are no implications between $\gs$-centeredness and $\gs$-liminf-centeredness: for example, the Hechler forcing is $\gs$-centered but not $\gs$-liminf-centered, while the random forcing is $\gs$-liminf-centered but not $\gs$-centered.

\begin{theorem}
\label{equitheorem}
Let $G$ be a closed graph on a compact metrizable space $X$.

\begin{enumerate}
\item \textnormal{\cite[Proposition 1]{todorcevic:examples}} $P_G$ is c.c.c.\ iff $G$ contains no perfect clique;
\item $P_G$ is $\gs$-centered iff $\chi(G)\leq\aleph_0$;
\item $P_G$ is $\gs$-liminf-centered iff $\lambda(G)\leq\aleph_0$.
\end{enumerate}
\end{theorem}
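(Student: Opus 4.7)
Part (1) is cited from the literature, so I would invoke it directly. Parts (2) and (3) share a common skeleton, which I would present in parallel, highlighting where the proof of (3) requires additional care with compactness and limits.

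For the easy directions, assume $P_G=\bigcup_n C_n$ with each $C_n$ centered (for (2)) or liminf-centered (for (3)). Set $A_n=\bigcup\{a_p:p\in C_n\}$; these cover $X$ since the trivial conditions $(\{x\},X)$ lie in $P_G$. For (2), any two points from $A_n$ end up in the anticlique of a common refinement, so each $A_n$ is a $G$-anticlique and $\chi(G)\leq\aleph_0$. For (3), I argue by contradiction: if some $A_n$ fails to be loose, extract $y_i\to x$ with $y_i\in A_n\setminus\{x\}$ and $(x,y_i)\in G$, pick $p_i\in C_n$ with $y_i\in a_{p_i}$, and let $q$ witness liminf-centeredness for $(p_i)$. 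If $x\in a_q$, then $q$ itself is incompatible with every $p_i$; if $x\notin a_q$, using finiteness of $a_q$ and regularity of $X$ I find an open $O$ with $a_q\subseteq O\subseteq o_q$ and $x\notin\bar O$, and then $r=(a_q,O)\leq q$ is compatible with only finitely many $p_i$, since $y_i\notin\bar O$ for $i$ large. Either way the liminf property fails.

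For the hard directions, I build a countable type decomposition indexed by a fixed countable basis of $X$. For (2), writing $X=\bigcup_n B_n$ with each $B_n$ a $G$-anticlique, the type of $p$ is a tuple $(n_j,V_j)_{j\leq k}$ where the $V_j$ are basic opens with pairwise $G$-disconnected closures contained in $o_p$ (arrangeable because $G$ is closed and $a_p$ is an anticlique), and $a_p=\{x_1,\ldots,x_k\}$ enumerates so that $x_j\in V_j\cap B_{n_j}$. Finitely many conditions of the same type have unions of $a_p$'s forming an anticlique (same $V_j$ means same $B_{n_j}$ anticlique; distinct $V_j$'s are $G$-disconnected) lying in the intersection of their $o_p$'s, yielding a common extension. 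For (3), using $X=\bigcup_n A_n$ with $A_n$ loose, I refine the type to $(n_j,V_j,V'_j)_{j\leq k}$ with basic opens $\overline{V'_j}\subseteq V_j\subseteq\bar V_j$ and $\bar V_j$'s pairwise $G$-disconnected, and the piece consists of $p$ with $a_p=\{x_1,\ldots,x_k\}$, $x_j\in V'_j\cap A_{n_j}$, and $\bar V_j\subseteq o_p$. Given a sequence $(p_i)$ in this piece, compactness of $\overline{V'_j}$ and a subsequence argument let me assume $x_i^j\to x^j\in\overline{V'_j}\subseteq V_j$; the candidate witness is $q=(\{x^j\}_{j\leq k},\bigcup_j V_j)$, a valid condition because the $x^j$ lie in pairwise $G$-disconnected closures $\bar V_j$. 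For any $r\leq q$, one checks $a_r\subseteq\bigcup_j V_j\subseteq\bigcup_j\bar V_j\subseteq o_{p_i}$, and $x_i^j\to x^j\in o_r$ gives $a_{p_i}\subseteq o_r$ for large $i$, so compatibility of cofinitely many $p_i$ with $r$ reduces to showing $a_{p_i}\cup a_r$ is a $G$-anticlique for large $i$.

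The chief technical subtlety lies precisely in this anticlique check. For $y\in a_r$ with $y\neq x^j$, non-adjacency of $y$ and $x_i^j$ for large $i$ follows from $a_r$ being an anticlique containing $x^j$ combined with $G$ being closed in the ambient space $(X\times X)$ with the diagonal removed: the pair $(y,x_i^j)$ converges to $(y,x^j)\notin G$, and this limit stays off the diagonal. When $y=x^j$ itself, however, the limit $(x^j,x^j)$ lies on the diagonal and closedness of $G$ provides no information; at this point I invoke looseness of $A_{n_j}$ at $x^j$, which supplies an open neighborhood of $x^j$ containing no point of $A_{n_j}$ that is a $G$-neighbor of $x^j$, and since $x_i^j\in A_{n_j}$ converges into this neighborhood one gets $x_i^j\not\sim x^j$ for $i$ large. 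This diagonal case is essentially the only use of looseness, and is what stops the same argument from falsely showing that every $P_G$ is $\sigma$-liminf-centered. The inner opens $V'_j\subsetneq V_j$ serve the secondary purpose of pushing the limits $x^j$ off the boundary of $V_j$, so that small neighborhoods of $x^j$ fit inside $o_{p_i}\supseteq\bar V_j$; verifying that every condition belongs to some type uses only regularity of $X$ and closedness of $G$, paralleling the $D_\eps$-style arguments already given earlier in the excerpt.
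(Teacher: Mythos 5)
Your proposal is correct and follows essentially the same route as the paper: the hard directions of (2) and (3) use the same countable decomposition of $P_G$ into pieces indexed by tuples of basic open sets with pairwise $G$-disconnected closures, with compactness of $X$ extracting limit points $\hat x_i$ and looseness invoked exactly where you say, namely for the diagonal pairs $(x^j, x_i^j)$ where closedness of $G$ off the diagonal gives nothing. The only real deviation is in the easy direction of (3), where you argue combinatorially by shrinking $o_q$ away from the bad limit point, whereas the paper derives a contradiction from the fact that $\dotkgen$ is forced to be a compact $G$-anticlique; both arguments are valid.
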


\begin{proof}
To see the left-to-right implication of (2), let $P_G=\bigcup_nA_n$ be a countable union of centered sets, and for every number $n\in\gw$ let $B_n=\{x\in X\colon \langle \{x\}, X\rangle\in A_n\}$. It is immediate that $X=\bigcup_nB_n$ holds; we must show that each set $B_n$ is a $G$-anticlique. This, however, is immediate since any edge in $B_n$ would result in a pair of incompatible conditions in $P_G$.

To see the right-to-left implication of (2), suppose that  $X=\bigcup_n B_n$ be a countable union of countably many $G$-anticliques.
Say that a finite sequence $t=\langle P_i, O_i, n_i\colon i\in j\rangle$ is \emph{good} if $P_i, O_i$ are basic open subsets of $X$, $\bar O_i\subset P_i$, the sets $\bar O_i$ are pairwise disjoint and $G$-disconnected, and $n_i\in\gw$. For each good sequence $t$, let $A_t\subset P_G$ be the set of those conditions $p=\langle a_p, o_p\rangle$ such that $a_p$ can be listed as $\{x_i\colon i\in j\}$ with $x_i\in \bar O_i\cap B_{n_i}$, and $o_p\supset \bigcup_{i\in j}P_i$. Since there are only countably many good sequences, the proof will be complete if we show that the set $A_t$ is centered.

Indeed, if $b\subset A_t$ is a finite set, then $a=\bigcup_{p\in b}a_b$ is a $G$-anticlique: if $x_0, x_1\in a$ then either both of them belong to the same set $O_i\cap B_{n_i}$ and they are $G$-disconnected as $B_{n_i}$ is an anticlique, or the belong to distinct such sets, and they are again $G$-disconnected since $\bar O_{i_0}$ is $G$-disconnected from $\bar O_{i_1}$ if $i_0\neq i_1$. It follows that the pair $\langle a, \bigcup_{i\in j}P_i\rangle$ is a condition in $P_G$ which is a lower bound of the set $b$.

To see the left-to-right implication of (3), let $P_G=\bigcup_nA_n$ be a countable union of liminf-centered sets, and for every number $n\in\gw$ let $B_n=\{x\in X\colon \langle \{x\}, X\rangle\in A_n\}$. It is immediate that $X=\bigcup_nB_n$ holds; we must show that each set $B_n$ is $G$-loose. Indeed, suppose towards a contradiction that there is a sequence $\langle x_m\colon m\in\gw\rangle$ of points in some set $B_n$ which converge to some point $y\in X$ and at the same time are connected to $y$. Since the set $A_n$ is liminf-centered, there must be a condition $p\in P_G$ which forces infinitely many points on the sequence to belong to the generic compact anticlique $\dotkgen$. However, then $p\Vdash\check y\in\dotkgen$ as well, contradicting the fact that $\dotkgen$ is forced to be a $G$-anticlique.

For the right-to-left implication of (3), suppose that $X=\bigcup_n B_n$ is a countable union of countably many $G$-loose sets.
Say that a finite sequence $t=\langle P_i, O_i, n_i\colon i\in j\rangle$ is \emph{good} if $P_i, O_i$ are basic open subsets of $X$, $\bar O_i\subset P_i$, the sets $\bar O_i$ are pairwise disjoint and $G$-disconnected, and $n_i\in\gw$. For each good sequence $t$, let $A_t\subset P_G$ be the set of those conditions $p=\langle a_p, o_p\rangle$ such that $a_p$ can be listed as $\{x_i\colon i\in j\}$ with $x_i\in \bar O_i\cap B_{n_i}$, and $o_p\supset \bigcup_{i\in j}P_i$. Since there are only countably many good sequences, the proof will be complete if we show that the set $A_t$ is liminf-centered.

To this end, suppose that $\langle p_l\colon l\in\gw\rangle$ is a countable sequence of conditions in the set $A_t$. List $a_{p_l}$ as $\{x_i^l\colon i\in j\}$ so that $x_i^l\in \bar O_i\cap B_{n_i}$. Use the compactness of the space $X$ and thin out the countable sequence of conditions if necessary to make sure that the sequence $\langle x_i^l\colon l\in\gw\rangle$ converges to a point $\hat x_i\in X$, this for each index $i\in j$. Since each of the sets $B_{n_i}$ is $G$-loose, there is a basic open neighborhood $R_i$ of $\hat x_i$ such that no points in $B_{n_i}$ in this neighborhood are connected to $\hat x_i$. Consider the condition $q\in P_G$ given by the following demands: $a_q=\{\hat x_i\colon i\in j\}$ and $o_q=\bigcup_{i\in j}P_i\cap R_i$. Since each point $\hat x_i$ belongs to $\bar O_i$ and the sets $\bar O_i$ are pairwise $G$-disconnected, it is clear that $a_q$ is a $G$-anticlique and so $q$ is indeed a condition in $P_G$. It will be enough to show that $q$ forces the set of all $l$ for which $p_l$ is in the generic filter to be infinite.

To this end, let $r\leq q$ be a condition and $\hat l\in\gw$ be a number. We need to produce a number $l>\hat l$ and a lower bound of the conditions $r, p_l$. To this end, for each $i\in j$ find open neighborhoods $S_i\subset X$ of $\hat x_i$ such that no point $x\in a_r$ with $x\neq \hat x_i$ has any $G$-neighbors in the set $S_i$. Find $l>\hat l$ so large that for each $i\in j$, the point $x_i^l$ belongs to $S_i\cap R_i$. Now note that the set $a_r\cup a_{p_l}$ is a $G$-anticlique: a point $x_i^l$ is not $G$-connected to any point $x\neq\hat x_i$ in $a_r$ because $x_i^l\in S_i$ holds, and it is not $G$-connected to $\hat x_i$ either since $x_i^l\in R_i\cap B_{n_i}$ holds. It follows that the pair $\langle a_r\cup a_{p_l}, o_r\rangle\in P_G$ is the requested lower bound of conditions $r$ and $p_l$.
\end{proof}

For the purposes of this paper, the $\gs$-liminf-centered property of posets has the following central implication:

\begin{theorem}
Let $P$ be a $\gs$-liminf-centered poset. Then $P$ does not add a dominating real. In fact, $P$ preserves unboundedness of all subsets of $\baire$.
\end{theorem}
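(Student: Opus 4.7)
The plan is to prove the stronger conclusion directly: for any ground-model unbounded $B \subset \baire$, $P$ forces $\check B$ to remain unbounded. The ``no dominating real'' assertion is then the special case $B = \baire \cap V$, which is unbounded in $V$ since $k \mapsto f(k)+1$ is never dominated by $f$.

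Fix a $\sigma$-liminf-centered decomposition $P = \bigcup_n A_n$ and a name $\dot f$ for an element of $\baire$. For each $n, k \in \gw$, I will define a ground-model ceiling
\[
h_n(k) = \min\{m \in \gw : \text{no } q \in A_n \text{ forces } \dot f(k) \geq m\}.
\]
The crux of the argument --- and the only place where liminf-centeredness enters --- is to show $h_n(k) < \gw$. Otherwise, for each $m$ pick $q_m \in A_n$ with $q_m \Vdash \dot f(k) \geq m$, apply liminf-centeredness of $A_n$ to the sequence $\langle q_m : m \in \gw \rangle$ to obtain a witness $q \in P$, extend $q$ to $r$ deciding $\dot f(k) = m^*$, and take a common lower bound $s$ of $r$ with some $q_m$ for $m > m^*$ (available since $r$ is compatible with infinitely many $q_m$). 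The condition $s$ simultaneously forces $\dot f(k) = m^*$ and $\dot f(k) \geq m > m^*$, a contradiction.

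With the $h_n$ in hand, I will set $h^*(k) = \max\{h_n(k) : n \leq k\}$ and, using unboundedness of $B$, pick $y \in B$ with $\{k : y(k) \geq h^*(k)\}$ infinite, which makes $\{k : y(k) \geq h_n(k)\}$ infinite for every fixed $n$. Supposing for contradiction that some $p$ forces $\dot f \geq^* \check y$, a routine density argument yields $q \leq p$, some $n$ with $q \in A_n$, and a specific $k_0 \in \gw$ with $q \Vdash \forall k \geq k_0\ \dot f(k) > \check y(k)$; one absorbs the existential threshold by working in a generic below $p$, reading off a specific $k_0$ there, and extracting a forcing witness for the resulting $\Pi^0_1$ statement. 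Picking any $k \geq k_0$ with $y(k) \geq h_n(k)$, the condition $q \in A_n$ then forces $\dot f(k) > y(k) \geq h_n(k)$, directly contradicting the defining property of $h_n(k)$.

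The main obstacle is the finiteness of $h_n(k)$, which is where liminf-centeredness does all the work; the remainder is a standard diagonal argument. A minor subtlety is that the mod-finite threshold $k_0$ in the domination statement is not a priori a ground-model integer, which is handled by the density argument at the start of the contradiction.
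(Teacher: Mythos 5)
Your proposal is correct and uses liminf-centeredness for exactly the same purpose as the paper's proof: to show that the set of values of $\dot f(k)$ forceable by conditions lying in a single piece $A_n$ is bounded (your finiteness of $h_n(k)$ is the paper's finiteness of the sets $c_k$, proved by the same compatibility argument). The only difference is one of packaging --- you diagonalize against all pieces simultaneously with the explicit ground-model function $h^*(k)=\max\{h_n(k)\colon n\le k\}$, whereas the paper reaches the same contradiction by choosing $y$ not dominated by any real in a countable elementary submodel containing the relevant data; both routes are sound.
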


\begin{proof}
Suppose towards a contradiction that $F\subset\baire$ is an unbounded set, $p\in P$ and $\dot z$ is a $P$-name for an element of $\baire$ such that $p\Vdash\dot z$ modulo finite dominates all elements of $F$. Let $P=\bigcup\{ A_n\colon n\in\gw\}$ be a union of liminf-centered sets. Let $M$ be a countable elementary submodel of a large enough structure containing $F, p, \dot z$ and $A_n$ for $n\in\gw$ as elements. Let $y\in F$ be a function which is not modulo finite dominated by any element of $\baire\cap M$, and let $q\leq p$ be a condition and $m\in\gw$ be a number such that $q\Vdash\forall k>m\ \dot z(k)>\check y_k$.

 Let $n\in\gw$ be such that $q\in A_n$. For each number $k\in\gw$, let $c_k\subset\gw$ be the set of all numbers $l$ such that there is a condition $r_{lk}\in A_n$ such that $r_{lk}\Vdash\dot z(k)>\check l$. The set $c_k$ must be finite, since otherwise by the liminf-centeredness of the set $A_n$ there would be a condition $r\in P$ which forces infinitely many of the conditions $r_{lk}$ into the generic filter, which means that $r$ forces that there is no value the name $\dot z(k)$ can attain. Now, the sequence $\langle c_k\colon k\in\gw\rangle$ belongs to the model $M$ by elementarity, and so is the function $\hat y\in\baire$ given by $\hat y(k)=\max(c_k)$. By the definitions, for all $k>m$, the number $y(k)$ belongs to the set $c_k$. This means that $\hat y$ dominates $y$ modulo finite, contradicting the choice of the function $y$.
\end{proof}

The reward for all the work in this section is the following corollary, which shows that closed graphs with countable loose number behave well in an important respect:

\begin{corollary}
Suppose that $G$ is a closed graph on a compact metrizable space $X$. If $X$ can be written as a countable union of $G$-loose sets, then the poset $P_G$ is c.c.c.\ and adds no dominating real.
\end{corollary}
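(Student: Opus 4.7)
The corollary follows by stitching together the three results immediately preceding it, so the plan is essentially bookkeeping rather than new combinatorics. The main observation is that the hypothesis is exactly the statement $\lambda(G)\leq\aleph_0$, so I can feed it directly into the equivalence theorem.

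First, I would invoke Theorem~\ref{equitheorem}(3): since $X$ is covered by countably many $G$-loose sets, the poset $P_G$ is $\gs$-liminf-centered. Write $P_G=\bigcup_n A_n$ as a union of liminf-centered sets.

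Next I would extract the c.c.c.\ conclusion. This uses the remark made just after the definition of $\gs$-liminf-centeredness: a liminf-centered set cannot contain an infinite antichain, because if $\langle p_i\colon i\in\gw\rangle$ were pairwise incompatible elements of some $A_n$, then the witnessing condition $q$ would have the property that the set of $i$ with $p_i$ compatible with $q$ is infinite, forcing at least two of the $p_i$ to be compatible with a common condition below $q$ and hence with each other, a contradiction. Since $P_G$ is a countable union of sets containing no infinite antichain, it is c.c.c.

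Finally, for the absence of dominating reals I would simply cite the preceding theorem that every $\gs$-liminf-centered poset preserves unboundedness of subsets of $\baire$, which in particular implies that no real in the extension modulo finite dominates every ground model real. Combining the two consequences yields the corollary. There is no genuine obstacle here; the only thing to be careful about is to note that the hypothesis ``countable union of $G$-loose sets'' is literally the definition of $\lambda(G)\leq\aleph_0$, so Theorem~\ref{equitheorem}(3) applies verbatim.
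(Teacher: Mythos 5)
Your proposal is correct and is essentially the paper's own derivation: the paper offers no separate proof of this corollary, treating it as the immediate conjunction of Theorem~\ref{equitheorem}(3) (the hypothesis is literally $\lambda(G)\leq\aleph_0$, so $P_G$ is $\gs$-liminf-centered), the remark that $\gs$-liminf-centered posets are c.c.c., and the preceding theorem that such posets preserve unboundedness and hence add no dominating real. The one imprecise step is your justification that a liminf-centered set $A_n$ contains no infinite antichain: from ``infinitely many $p_i$ are compatible with $q$'' you conclude that two of the $p_i$ are ``compatible with a common condition below $q$ and hence with each other,'' but two conditions compatible with a common condition need not be compatible with one another. The correct argument uses the full quantifier over all $r\leq q$: first find $r\leq q$ with $r\leq p_{i_0}$ for some $i_0$ (possible since $q$ itself is compatible with some $p_{i_0}$), then apply the liminf property to this $r$ to get $i_1\neq i_0$ with $p_{i_1}$ compatible with $r$; any common lower bound of $r$ and $p_{i_1}$ lies below both $p_{i_0}$ and $p_{i_1}$, contradicting incompatibility. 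Since this fact is asserted in the paper right after the definition, the corollary itself is unaffected.
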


\section{Iteration}
\label{iterationsection}

This section is devoted to the problem of iterating the poset $P_G$ without adding dominating reals, which we find quite tricky. The main difficulty is that in the absence of a characterization of countable loose number of closed or analytic graphs in descriptive theoretic terms, even if the graph $G$ has countable loose number in the ground model, there is no guarantee that it will maintain this property in the intermediate generic extensions arising in the iteration. Failing that, it could occur that the poset $P_G$ destroys some unbounded sequences in the intermediate extensions, and the iteration adds a dominating real after all. We could not find any example of such a situation as all closed graphs of countable loose number we know of possess this property in all generic extensions. Still, the difficulty forced us to consider a rather unlikely workaround. It starts with the following natural definition:

\begin{definition}
A pair $\langle P, \leq\rangle$ is a Borel partial order if the relation $\leq$ is a partial ordering on $P$, there is a Polish space $X$ such that $P\subset X$ is analytic, and the relations $\leq$, compatibility, and incompatibility are analyticl subsets of $X\times X$.
\end{definition}

\noindent As a particularly relevant example, if $G$ is a closed graph on some Polish space $X$, then the poset $P_G$ can be easily viewed as a Suslin partial ordering. Note that every Suslin partial ordering has a natural interpretation in every generic extension, as a straightforward application of the Shoenfield absoluteness shows. An interesting question appears, which of the forcing properties
of the poset $P$ are absolute throughout all forcing extensions? The most important general fact in this direction is the following classical absoluteness theorem:

\begin{fact}
\label{abso1fact}
\textnormal{\cite[Corollary 3.6.9]{bartoszynski:set}}
Let $\langle P, \leq\rangle$ be a Suslin partial order. If in some generic extension $P$ is c.c.c.\ then in all generic extensions $P$ is c.c.c.
\end{fact}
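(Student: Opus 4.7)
The plan is to exhibit the c.c.c.\ property of a Suslin forcing as a ${\mathbf\Sigma}^1_1$ statement in tree codes for $P$ and its incompatibility relation, and to conclude by Shoenfield absoluteness. The key combinatorial input is a two-dimensional perfect set theorem saying that an uncountable antichain in a Suslin forcing already produces a perfect antichain.

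Fix trees $T_P$ on $\gw\times\gw$ and $T_\perp$ on $\gw\times\gw\times\gw$ with $P=p[T_P]$ and ${\perp}=p[T_\perp]$. Call $(S,W)$ a \emph{perfect antichain code} if $S$ is a subtree of $T_P$ in which every node has two extensions whose first-coordinate projections are $\subseteq$-incomparable, and $W$ assigns, to each pair of distinct first-coordinate nodes of $S$ at a common level, a node of $T_\perp$ coherently under extension and with the expected first-two coordinate projections. The arithmetic conditions on $(S,W)$ guarantee that the branches of $S$ project to a perfect $Q\subseteq P$ and that any two distinct points of $Q$ are $\perp$-related, as witnessed by the branches of $W$; conversely, from a perfect antichain $Q$ in a model of ZFC one readily carves out such $(S,W)$. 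Thus ``$P$ admits a perfect antichain'' is a ${\mathbf\Sigma}^1_1$ statement in $T_P,T_\perp$ (existence of the reals $S,W$ meeting an arithmetical condition), and is therefore absolute between $V$ and any of its generic extensions.

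Next I would prove the key equivalence: for a Suslin $P$ in any ZFC model, $P$ fails c.c.c.\ iff $P$ admits a perfect antichain. A perfect antichain has size $2^{\aleph_0}$, so one direction is free. For the converse, given an uncountable antichain $A=\{p_\ga\colon\ga<\aone\}\subseteq P$, the ${\mathbf\Sigma}^1_1$ symmetric relation $R={\perp}\cap(P\times P)$ is disjoint from the diagonal and satisfies $A^2\setminus\Delta\subseteq R$. I then invoke the two-dimensional perfect set theorem: if $R$ is an analytic symmetric off-diagonal relation on a Polish space and there is an uncountable set whose off-diagonal square lies in $R$, then some perfect set does too. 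This is established by a fusion over $\bintree$ in which, at every splitting node, one uses the reservoir $A$ to select two new points, lengthens the witnesses in the defining tree of $R$ for all pairs of points placed so far, and maintains the property indefinitely.

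Putting it together: if $P$ is c.c.c.\ in some $V[G]$, the equivalence applied in $V[G]$ yields no perfect antichain code there, ${\mathbf\Sigma}^1_1$-absoluteness propagates this to $V$ and to every other $V[H]$, and the equivalence applied in $V[H]$ delivers c.c.c.\ in $V[H]$. The main obstacle is the two-dimensional perfect set theorem: unlike the classical Mansfield-Solovay result, one does not begin with an uncountable \emph{analytic} set, only an uncountable set contained in an analytic relation, so the fusion has to be organized to thin out from the given uncountable reservoir while simultaneously extending compatible tree witnesses for all pairs of points that have been placed at the current level.
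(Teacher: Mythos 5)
The first half of your argument is sound: a ``perfect antichain code'' $(S,W)$ is indeed a $\mathbf{\Sigma}^1_1$ matter in the trees, its existence is absolute, and it certainly destroys c.c.c.\ wherever it exists. The gap is the converse, your ``two-dimensional perfect set theorem.'' As you state it --- an analytic symmetric off-diagonal relation containing the off-diagonal square of an uncountable set must contain the off-diagonal square of a perfect set --- this is not a theorem of ZFC; it is exactly the subject of Shelah's \emph{Borel sets with large squares} (Fund.\ Math.\ 159 (1999)), where it is shown that a Borel relation can contain an uncountable square without containing a perfect one, and that ``uncountable'' must be replaced by a much larger cardinal (of the order of $\beth_{\gw_1}$) before the implication becomes provable. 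The place where your fusion breaks is identifiable: the witnesses in the tree for $R$ are attached to \emph{pairs} of reservoir points, so to lengthen the committed witness node for a given pair of cells while thinning the reservoirs you need, for that pair of cells, an uncountable-by-uncountable subrectangle of the two reservoirs all of whose witnesses pass through a common successor node. That is an instance of the polarized partition relation $\binom{\gw_1}{\gw_1}\to\binom{\gw_1}{\gw_1}^{1,1}_{\aleph_0}$, which fails outright in ZFC: $\gw_1\times\gw_1$ is covered by countably many graphs of partial functions and of inverse functions, none of which contains even a $2\times 2$ rectangle. Your fusion does go through when the relation is closed, open, or $G_\delta$, because there the witness for a pair is a pair of neighborhoods and condensation points of the reservoir inherit it; for a genuinely analytic incompatibility relation it does not, and you have named the obstacle without removing it.

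Consequently the whole scheme would at best have to exploit what is special about a Suslin order --- that compatibility is \emph{also} analytic (so by Luzin separation incompatibility agrees with a Borel set on $P\times P$) and that incompatibility is inherited downward in the order --- none of which enters your fusion. The standard proof of the cited fact (the paper itself only quotes it from Bartoszy\'nski--Judah) avoids perfect antichains altogether: one shows that the existence of an \emph{uncountable} antichain is itself absolute, by coding it as the consistency of a sentence of $L_{\gw_1\gw}(Q)$ in the quantifier ``there exist uncountably many'' (equivalently, by producing an $\gw_1$-like elementary end-extension of a countable $\gw$-model that believes $P$ has an uncountable antichain, whose antichain elements and incompatibility witnesses are genuine reals, so that upward $\mathbf{\Sigma}^1_1$ absoluteness returns a true uncountable antichain in the target extension). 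Keisler's completeness theorem makes that consistency statement absolute between $V$ and all of its generic extensions. To repair your write-up you would either need to supply this model-theoretic argument, or prove the perfect-antichain dichotomy specifically for Suslin partial orders --- which, in view of Shelah's counterexamples for general Borel relations, cannot be done by the reservoir fusion you describe.
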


\noindent For our main theorem, the key point is the absoluteness of adding no dominating reals. This is handled by the following:

\begin{theorem}
\label{btheorem}
Let $\langle P, \leq\rangle$ be a Suslin c.c.c.\ partial order. The following are equivalent:

\begin{enumerate}
\item in some generic extension, $P$ adds a dominating real;
\item in every forcing extension, $P$ adds a dominating real.
\end{enumerate}

\noindent Moreover, if $P$ adds no dominating reals then it preserves all unbounded sequences of elements of $\baire$ which are modulo finite increasing, consist of monotonic functions, and have regular uncountable length greater than $\gw_1$.
\end{theorem}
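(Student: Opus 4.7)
The implication (2)$\Rightarrow$(1) is immediate by taking the trivial forcing. For the converse, the plan is to express ``$P$ adds a dominating real'' as a $\Sigma^1_2$ assertion in the code of $P$ and to invoke Shoenfield absoluteness. Since $P$ is Suslin c.c.c., $P$-names for elements of $\baire$ admit coding by reals as ``nice'' names (for each $n$, a countable maximal antichain deciding $\dot z(n)$), and for such coded names the forcing relation ``$q\Vdash\dot z(k)>y(k)$'' is Borel in the relevant parameters. The density assertion ``for every $y\in\baire$ the set of $q\leq p$ forcing $\dot z$ to eventually dominate $y$ is dense below $p$'' then reduces, using c.c.c.\ to keep the outer quantifiers tame, to a $\Sigma^1_2$ formula. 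Shoenfield absoluteness then gives (1)$\Leftrightarrow$(2).

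For the moreover clause, fix an unbounded $<^*$-increasing sequence $\langle f_\alpha:\alpha<\kappa\rangle$ of monotonic functions, with $\kappa>\gw_1$ regular, and suppose towards contradiction that $p\in P$ and a name $\dot z$ satisfy $p\Vdash\forall\alpha<\kappa\ f_\alpha<^*\dot z$. For each $\alpha<\kappa$ pick $q_\alpha\leq p$ and $n_\alpha\in\gw$ with $q_\alpha\Vdash\forall k\geq n_\alpha\ \dot z(k)>f_\alpha(k)$. Pigeonholing the countable range of $n_\alpha$ against the regular $\kappa>\gw_1$ yields a single $n^*\in\gw$ and a cofinal $X\subseteq\kappa$ with $n_\alpha=n^*$ for $\alpha\in X$. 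Consider $g(k)=\sup\{f_\alpha(k):\alpha\in X\}\in\gw\cup\{\gw\}$. If $g(k)<\gw$ for all $k\geq n^*$, then the function $g'\in V$ agreeing with $g(k)+1$ on $[n^*,\gw)$ and vanishing below satisfies $f_\alpha<^*g'$ for all $\alpha\in X$; by cofinality of $X$ and the $<^*$-increasing property, $g'$ bounds every $f_\alpha$ in the sequence, contradicting unboundedness.

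Otherwise some $k_0\geq n^*$ has $g(k_0)=\gw$, and monotonicity of the $f_\alpha$'s propagates this to $g(k)=\gw$ for all $k\geq k_0$. Below $p$ the name $\dot z$ is then forced to take arbitrarily large values on $[k_0,\gw)$. The remaining step is to extract from this a dominating real over $V$ and so contradict the no-dominating-real hypothesis: for every $q\leq p$ the function $B_q(k)=\min\{m:q\Vdash\dot z(k)\leq m\}$ must take the value $\gw$ at some $k$ (else $B_q\in V$ would $<^*$-bound $\{f_\alpha\}$), and combining this pointwise unboundedness of $\dot z$ below $p$ with the ground-model witnesses $y\in V$ for the failure of no-dominating-real --- via another $\kappa>\gw_1$ pigeonhole --- should produce a condition forcing $\dot z$ to dominate every ground-model real. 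The main obstacle is exactly this last step: converting ``$\dot z$ is pointwise unbounded below $p$'' into ``$P$ adds a dominating real over $V$'' cleanly. The intended argument is modeled on the classical $\mathfrak{b}$-preservation proofs for c.c.c.\ forcings that do not add dominating reals (as in \cite{bartoszynski:set}), with the hypothesis $\kappa>\gw_1$ serving to absorb the countable choices of witnesses produced along the way.
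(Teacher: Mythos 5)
Both halves of your argument stop short at exactly the points where the paper has to work hardest, and in each case the gap is real. For the equivalence of (1) and (2), your plan is to render ``$P$ adds a dominating real'' as a $\Sigma^1_2$ statement and quote Shoenfield, but the quantifier count does not come out that way. Even with nice names, ``$q\Vdash\dot z$ modulo finite dominates $\check y$'' is analytic in $q,\dot z,y$, so ``$\dot z$ is a name for a dominating real'' has the form $\forall y\,\forall q\,\exists r\,(\dots)$ with an analytic matrix, which is $\Pi^1_2$; prefixing the existential quantifiers over names and conditions yields a $\Sigma^1_3$ statement. The c.c.c.\ hypothesis does not remove the universal quantifier over all ground-model reals $y$, and Shoenfield absoluteness does not reach $\Sigma^1_3$. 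This is precisely the obstruction the paper flags, and it replaces your step by two moves: first, reflect inside $V[H]$ to a countable transitive model which believes $P$ adds a dominating real --- the existence of such a model \emph{is} $\Sigma^1_2$ in the code of $P$ and so transfers to $V[G]$ --- and second, apply Proposition~\ref{cproposition}, which upgrades ``dominating over the countable model'' to ``dominating over $V[G]$'' by forcing with Hechler forcing, observing that the trace of the Hechler filter on the countable transitive model is generic over it and that the name must dominate the Hechler real, which in turn dominates any prescribed ground-model function. Some such lifting device is indispensable; without it your first half does not close.

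For the moreover clause, your pigeonhole on $n_\alpha$ and the supremum $g$ only give the dichotomy ``either the family is bounded or $\dot z$ is forced to be pointwise unbounded below $p$,'' and, as you yourself acknowledge, the second horn does not produce a dominating real: the canonical Cohen name is pointwise unbounded below every condition, yet Cohen forcing adds no dominating real. Note also that your argument never uses that $P$ is Suslin, whereas the moreover clause fails for general c.c.c.\ posets (a c.c.c.\ forcing can bound a non-dominating unbounded family without adding a real dominating all of $\baire\cap V$). Definability enters exactly at your missing step, via Proposition~\ref{eproposition}: the set $B$ of monotone $y$ with $P\Vdash\gs>\check y$ modulo finite is $\Sigma^1_2$, hence by Kuratowski a union of $\aleph_1$ analytic sets; if $B$ contains an increasing unbounded sequence of regular length $>\gw_1$, then one analytic piece $B_\ga$ is unbounded, and the Ellentuck-density argument produces $y_0,y_1\in B_\ga$ with $\max(y_0,y_1)$ dominating any prescribed $z$, so that $B$ is dominating and $\gs$ is forced to be a dominating real, contradicting the hypothesis. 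That is where the hypotheses ``monotonic'' and ``length $>\gw_1$'' are actually consumed; your use of $\kappa>\gw_1$ only needs uncountable cofinality and does not engage the real difficulty.
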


The theorem quickly follows from two propositions of independent interest.

\begin{proposition}
\label{dproposition}
Let $P$ be a Suslin forcing. Suppose that $\kappa>\gw_1$ is a regular cardinal, and $F\colon\kappa\to\baire$ be a $\kappa$-sequence of monotonic functions in $\baire$ increasing and unbounded in the modulo finite domination order. Suppose that $P\Vdash\gs\in\baire$ is a function modulo finite dominating all functions in $\rng(F)$. Then $P\Vdash\gs$ is a dominating real.
\end{proposition}

\begin{proof}
The argument depends on two well-known claims:

\begin{claim}
\label{naclaim1}
Suppose that $Q$ is a partial ordering and $\gs$ is a $Q$-name for a monotonic function in $\baire$, and $P\Vdash\gs$ is not bounded by any ground model element of $\baire$. Then, there are filters $G_0, G_1\subset Q$ separately generic over the ground model such that $\max(\gs/G_0, \gs/G_1)$ modulo finite dominates all elements of the ground model.
\end{claim}

\begin{proof}
Enter a generic extension $V[H]$ in which both $\power(P)\cap V$ and $\baire\cap V$ are countable sets. Let $y\in\baire$ be a monotonic function which modulo finite dominates all functions in the ground model, and let $\{D_n\colon n\in\gw\}$ be an enumeration of all open dense subsets of $P$ in the ground model. By induction on $n\in\gw$ build conditions $p_n, q_n\in P$ and numbers $k_n,l_n\in\gw$ so that

\begin{enumerate}
\item $p_0\geq p_1\geq\dots$ and $q_0\geq q_1\geq\dots$;
\item $p_{n+1}, q_{n+1}\in D_n$;
\item $k_n<l_n<k_{n+1}$;
\item there are infinitely many $m$ for which there is a condition $p\leq p_n$ forcing $\gs(\check k_n)=\check m$,
and there are infinitely many $m$ for which there is a condition $q\leq q_n$ forcing $\gs(\check l_n)=\check m$;
\item $p_{n+1}\Vdash\gs(\check k_n)>\check y(l_n)$ and $q_{n+1}\Vdash\gs(\check  l_n)>\check y(k_{n+1})$.
\end{enumerate}

\noindent To see how this is done, suppose that $p_n, q_n, k_n, l_n$ have been found. First, use item (4) to find a condition $p\leq p_n$ which forces $\gs(\check k_n)>\check y(l_n)$. Strengthen $p$ into a condition $p_{n+1}\in D_n$, and use the fact that $\gs$ is forced to be an unbounded real to find a number $k_{n+1}>l_n$ such that for infinitely many $m\in\gw$ the condition $p_{n+1}$ can be strengthened to force $\gs(k_{n+1})=\check m$. Then, use item (4) again to find a condition $q\leq q_n$ which forces $\gs(\check l_n)>\check y(k_{n+1})$. Strengthen $q$ into a condition $q_{n+1}\in D_n$, and use the fact that $\gs$ is forced to be an unbounded real to find a number $l_{n+1}>k_{n+1}$ such that for infinitely many $m\in\gw$ the condition $q_{n+1}$ can be strengthened to force $\gs(l_{n+1})=\check m$. This concludes the induction step.

In the end, the filters $G_0, G_1\subset Q$ generated by the sets $\{p_n\colon n\in\gw\}$ and $\{q_n\colon n\in\gw\}$ will work as desired by the second and last items of the induction hypothesis. Note that the function $\max(\gs/G_0, \gs/G_1)$ dominates $y$ from $k_0$ on.
\end{proof}

\begin{claim}
\label{naclaim2}
Let $A$ be a set. In some generic extension, there is an elementary embedding $j$ of $V$ into an $\gw$-model $N$ such that $j''A$ is a countable set in $N$.
\end{claim}

\begin{proof}
Just let $Q$ be the poset of all stationary subsets of $[A]^{\aleph_0}$ ordered by inclusion. Whenever $G\subset Q$ is a filter generic over $V$, one can form the generic ultrapower $j\colon V\to N$ modulo $G$. Its target model is an $\gw$-model since the ideal of nonstationary subsets of $[A]^{\aleph_0}$ is $\gs$-complete, and the identity function represents the set $j''A\in N$, which is then countable in $N$ by the \L o\' s theorem.
\end{proof}

Now, suppose that $P$ is a Suslin c.c.c.\ partial ordering. Without loss of generality, assume that its underlying Polish space $X$ is the Baire space. The set $A=\{x\in X^\gw\colon$ if $\rng(x)$ is an antichain in $P$ then it is a maximal antichain$\}$ is coanalytic. Fix a continuous function $f$ with domain $X^\gw$ and range included in the space of all trees on $\gw$ such that for all $x\in X^\gw$, $x\in A$ just in case $f(x)$ is a wellfounded tree.

Suppose that $F\colon\kappa\to \baire$ is a modulo finite increasing sequence of monotonic functions in $\baire$ of regular uncountable length $\kappa>\gw_1$, which is unbounded in $\baire$. Suppose that $\gs$ is a $P$-name for a real that dominates all functions in $\rng(F)$, and argue that $\gs$ is in fact a name for a dominating real. To this end, let $E$ be a function with domain $\kappa$ which for every $\ga\in\kappa$ returns an enumeration $E(\ga)$ of a countable maximal antichain in $P$ consisting of conditions $p\in P$ for which there is $n\in\gw$ such that for each $m>n$ there is  no condition in the antichains in the name $\gs$ which forces $\gs(m)<y_\ga(m)$ and is compatible with $p$. Let $D(\ga)=f(E(\ga))$; these trees are all well-founded, each of some countable rank, and by a counting argument, one can thin down the sequence so that the ranks are all bounded by some countable ordinal $\gb$. This is where we use the assumption that the sequence has length greater than $\gw_1$.

Now, use Claim~\ref{naclaim2} to find a poset $Q$ and a $Q$-name $j$ for an elementary embedding of the ground model to an $\gw$-model $N$ containing the set $j''\kappa$ so that this set is countable in $N$. Let $\dot\gg$ be a $Q$-name for the supremum of $j''\kappa$, which is an $N$-ordinal in $j\kappa$. The $Q$-name $jF(\gg)$ is a name for an unbounded real, since it is forced to modulo finite dominate all points on the sequence. Also, the $Q$-name $jE(\gg)$ is a name for a maximal antichain of $P$. This is somewhat trickier: $jE(\ga)$ is a maximal antichain in the model $N$, but $N$ need not be wellfounded and so may not be correct about maximal antichains of the poset $P$. However, $N$ calculates the tree $jD(\gg)$ correctly and by elementarity it models that it is wellfounded of rank $\gb$. Since $N$ is an $\gw$-model, it follows that $j\gb=j''\gb$ and in the $Q$-extension, the $N$-ordinal $j(\gb)$ is wellfounded, the tree $jD(\ga)$ is wellfounded, and $jE(\gg)$ is a maximal antichain in $P$.

Use Claim~\ref{naclaim1} to find a generic extension $V[H]$ in which there are filters $G_0, G_1\subset Q$ which are separately generic over the ground model such that the point $\max\{jF(\gg)/G_0, jF(\gg)/G_1\}$ modulo finite dominates all elements of the ground model. Let $K\subset P$ be a filter generic over $V[H]$. By the previous paragraph, the point $\gs/K\in\baire$ modulo finite dominates both $jF(\gg)/G_0, jF(\gg)/G_1$ and so is dominating over $V$. At the same time, the filter $K\cap V$ is generic over $V$, since the model $V$ is transitive and therefore correct about maximal antichains of the poset $P$. Thus, we produced a filter on $P$ which is generic over $V$ and at the same time evaluates $\gs$ as a real dominating over $V$.
\end{proof}

\begin{proposition}
\label{cproposition}
Let $P$ be a Suslin c.c.c.\ forcing and let $M$ be a transitive model of set theory containing the code for $P$. Let
$\gs\in M$ be a $P$-name for an element of $\baire$ such that $M\models P\Vdash\gs$ is a dominating real. Then $P\Vdash\gs$ is a dominating real.
\end{proposition}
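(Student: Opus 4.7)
My plan is to apply Proposition~\ref{eproposition} in $V$ to the set $B=\{y\in\baire\colon y$ is monotone and $P\Vdash\check y<^*\sigma\}$. The dichotomy there says $B$ is either modulo-finite dominating in $V$, or a union of $\aleph_1$-many modulo-finite bounded sets. If $B$ is dominating in $V$, the conclusion follows immediately: for any $y\in V\cap\baire$, taking the monotone envelope $y^*\in V$ and some $y'\in B$ with $y^*\leq^* y'$ yields $P\Vdash\check y<^*\check y'<^*\sigma$, so $\sigma$ dominates every ground model real in $V$.

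The main work is the case where $B$ is a union of $\aleph_1$ bounded sets in $V$. Inspecting the proof of Proposition~\ref{eproposition}, this case is equivalent to every analytic piece $B_\alpha^V$ in the canonical Kuratowski decomposition $B=\bigcup_{\alpha<\gw_1^V}B_\alpha^V$ being modulo-finite bounded in $V$. Since the decomposition is determined by the $\mathbf{\gS}^1_2$-formula for $B$ exhibited in the proof of Proposition~\ref{eproposition}, whose parameters (the codes of $P$ and $\sigma$) lie in $M$, for every $\alpha<\gw_1^M$ the Borel code of $B_\alpha^V$ lies in $M$, and $B_\alpha^M=B_\alpha^V\cap M$. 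Shoenfield absoluteness applied to the $\mathbf{\gS}^1_2$ statement ``the analytic set with code $c$ is modulo-finite bounded'' transfers boundedness from $V$ down to $M$: each $B_\alpha^M$ with $\alpha<\gw_1^M$ is modulo-finite bounded in $M$.

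On the other hand, the hypothesis $M\models P\Vdash\sigma$ is dominating forces $B^M$ to contain every monotone real of $M$, so $B^M$ is modulo-finite dominating in $M$. Rerunning the Ellentuck-style argument from the proof of Proposition~\ref{eproposition} inside $M$, I claim that some $B_\alpha^M$ ($\alpha<\gw_1^M$) must be modulo-finite unbounded in $M$. Unboundedness of the analytic set with code $c$ is a $\mathbf{\Pi}^1_2$ statement in $c$, so by Shoenfield absoluteness $B_\alpha^V$ is modulo-finite unbounded in $V$ as well; one more application of the proof of Proposition~\ref{eproposition} in $V$ (the $\max$ construction from an unbounded analytic piece) makes $B$ dominating in $V$, contradicting the present case. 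The main obstacle is justifying the extraction of an unbounded analytic piece in $M$ from the dominating-ness of $B^M$: Proposition~\ref{eproposition}'s dichotomy as stated is inclusive, so the step ``$B^M$ dominating in $M$ and all $B_\alpha^M$ bounded in $M$ is impossible'' requires a careful refinement of the Ellentuck-style argument, leveraging both the explicit $\mathbf{\gS}^1_2$ presentation of $B$ and the strong hypothesis that $B^M$ actually contains all monotone reals of $M$.
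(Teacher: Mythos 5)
There is a genuine gap, and it sits exactly at the step you flag as ``the main obstacle'': no refinement of the argument for Proposition~\ref{eproposition} can extract an unbounded analytic piece $B^M_\ga$ from the fact that $B^M$ contains every monotone real of $M$. If $M\models\mathfrak{d}=\aleph_1$ (e.g.\ $M\models$ CH), then ``$B^M$ contains all monotone reals of $M$'' is perfectly consistent with ``every $B^M_\ga$, $\ga<\gw_1^M$, is modulo-finite bounded in $M$,'' since under $\mathfrak{d}=\aleph_1$ the whole of $\baire$ is a union of $\aleph_1$ bounded sets; the dichotomy of Proposition~\ref{eproposition} is inclusive and its proof only shows that an unbounded piece forces domination, never the converse. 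So the contradiction you are aiming for simply does not arise. The surrounding absoluteness transfers are also running in the wrong direction for a possibly countable transitive $M$: ``the analytic set coded by $c$ is modulo-finite bounded'' is $\mathbf{\gS}^1_2$, hence upward absolute from $M$ to $V$ but not downward (a bound in $V$ need not exist in $M$; indeed a single real of $V$ can dominate all of $\baire\cap M$), and dually ``unbounded'' is $\mathbf{\Pi}^1_2$ and goes down, not up. One can partially repair this via the Hurewicz--Kechris--Saint-Raymond dichotomy, which makes boundedness of analytic sets $\mathbf{\Delta}^1_2$, but that does not rescue the central step. At bottom, when $M$ is countable --- the only case where the proposition says more than Shoenfield absoluteness, as the paper's proof notes --- the hypothesis constrains $B$ only on the countably many reals of $M$, and any countable set of reals is bounded in $V$; your framework has no mechanism to convert that information into domination over all of $V$.

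The paper's proof is entirely different and supplies exactly the missing mechanism: given $p\in P$ and $y\in\baire$, it first forces with the Hechler poset $Q$ over $V$, observes that $H\cap M$ is $Q$-generic over $M$ because the transitive model $M$ computes maximality of antichains correctly, so that $M[H\cap M]$ contains a real dominating $y$; Shoenfield absoluteness between $M$ and its generic extension $M[H\cap M]$ preserves the statement ``$P\Vdash\gs$ is dominating''; and then a $P$-generic filter $G$ over $V[H]$ containing $p$ restricts to a generic filter over $M[H\cap M]$, forcing $\gs/G$ to dominate the Hechler real and hence $y$. In other words, the proof enlarges $M$ generically so that its reals are no longer negligible from the point of view of $V$, which is precisely the leverage your absoluteness-only approach cannot generate. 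If you want to salvage a Proposition~\ref{eproposition}-style argument, note that the paper does use one --- but only in the last part of Theorem~\ref{btheorem}, where the unbounded family has regular length greater than $\gw_1$, which is what genuinely rules out the ``union of $\aleph_1$ bounded sets'' alternative.
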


\begin{proof}
It is easy to show that the statement ``$\gs$ is a $P$-name for a dominating real'' is $\gS^1_2$ in parameter $\gs$. Thus, if the model $M$ contains all countable ordinals, the proposition is an immediate consequence of the Shoenfield absoluteness. However, we want to apply the proposition exactly in the case when $M$ is countable, and more work is needed.

Let $p\in P$ be a condition and $y\in\baire$ be an arbitrary function; we will produce a filter $G\subset P$ generic over $V$ and such that $p\in G$ and $\gs/G$ modulo finite dominates $y$. The proposition then follows by a genericity argument. Let $Q$ be the Hechler forcing. Let $H\subset Q$ be a filter generic over $V$. Since the model $M$ is transitive, it evaluates maximality of antichains in $Q$ correctly and therefore, the filter $H\cap M$ is Hechler-generic over $M$. Let $G\subset P$ be a filter generic over $V[H]$, containing the condition $p$. Since the model $M[H\cap M]$ is transitive, it evaluates the maximality of antichains in $P$ correctly, and therefore, the filter $G\cap M[H\cap M]$ is $P$-generic over $M[H\cap M]$. By the Shoenfield absoluteness applied between the models $M$ and $M[H\cap M]$, $M[H\cap M]\models\gs$ is a $P$-name for a dominating real. Thus, $\gs/G$ must modulo finite dominate the Hechler real, which can be found in the model $M[H\cap M]$. The Hechler real dominates the function $y$. As a result,
$\gs/H$ modulo finite dominates the function $y$ as desired.
\end{proof}

\begin{proof}[Proof of Theorem~\ref{btheorem}]
For the equivalence of (1) and (2), suppose that $V[G]$ and $V[H]$ are two generic extensions and in $V[H]$, there is a condition in $P$ below which $P$ adds a dominating real. We must show that in $V[G]$, there is a condition in $P$ below which $P$ adds a dominating real.

By a downward Loewenheim--Skolem argument, in $V[H]$ there is a countable transitive model of a large fragment of set theory
which satisfies that $P$ below some condition adds a dominating real. By a Shoenfield absoluteness argument, such a model must exist in the model $V[G]$. Proposition~\ref{cproposition} applied in $V[G]$ then shows that in $V[G]$, $P$ below some condition adds a dominating real.

The last sentence of the theorem is just a specialization of Proposition~\ref{dproposition}.
\end{proof}

\noindent The conclusion of Theorem~\ref{btheorem} is rather odd. In the presence of Woodin cardinals, the demand on the length of the unbounded sequence disappears as the embedding in Claim~\ref{naclaim2} can be found so that its target model is transitive. Still, the conclusion is good enough to dovetail with the following iteration theorem:

\begin{fact}
\label{iterationfact}
\textnormal{\cite[Lemma 6.5.3]{bartoszynski:set}}
Let $F\subset\baire$ be a unbounded family such that every countable subset of $F$ has an upper bound in $F$ in the modulo finite domination ordering. Let $\langle P_\ga\colon\ga\leq\lambda, \dot Q_\ga\colon\ga\in\lambda\rangle$ be a finite support iteration of c.c.c.\ forcings such that each iterand preserves the unboundedness of $F$. Then the whole iteration preserves the unboundedness of $F$.
\end{fact}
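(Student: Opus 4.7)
The plan is to prove by transfinite induction on $\ga\le\lambda$ that $P_\ga$ preserves the unboundedness of $F$. The base case $\ga=0$ is trivial, and the successor step is routine: at $\ga+1$ the factorization $P_{\ga+1}\cong P_\ga\ast\dot Q_\ga$ combined with the inductive hypothesis (that $F$ is unbounded in $V^{P_\ga}$) and the hypothesis on the iterand immediately gives unboundedness in $V^{P_{\ga+1}}$.

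At a limit $\gd$ of uncountable cofinality the argument is the standard reflection: by c.c.c.\ and finite support, any $P_\gd$-name for an element of $\baire$ is coded by countably many countable maximal antichains of conditions, each condition of finite support below $\gd$, so the total support of such a name is a countable, hence bounded, subset of $\gd$. A potential counterexample to preservation therefore reflects into $P_\ga$ for some $\ga<\gd$ and contradicts the inductive hypothesis at $\ga$.

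The main obstacle is the limit case of countable cofinality. Fix $\gd\le\lambda$ with $\mathrm{cf}(\gd)=\gw$ and a cofinal increasing sequence $\ga_n\nearrow\gd$. Suppose toward contradiction that $p\in P_\gd$ and a name $\dot g$ satisfy $p\Vdash\forall y\in F\ \dot g\ge^* y$. For each $n$ I would construct a $P_{\ga_n}$-name $\dot g_n$ for an element of $\baire$ that encodes ``what can be read off $\dot g$ by stage $\ga_n$''. Concretely, one works in $V^{P_{\ga_n}}$ with the quotient $P_\gd/\dot G_{\ga_n}$: for each $k$ enumerate a maximal (countable, by c.c.c.) antichain below the image of $p$ of conditions deciding $\dot g(k)$, and let $\dot g_n(k)$ be a majorant of the countably many decided values in a way that yields a total function. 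The design criterion is that a dense set of extensions of $p$ in $P_\gd$ force $\dot g(k)\le\dot g_n(k)$ for all sufficiently large $k$.

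By the inductive hypothesis applied in $V^{P_{\ga_n}}$, the name $\dot g_n$ cannot bound $F$: there is $y_n\in F$ and an extension of $p\restriction\ga_n$ forcing $\dot g_n\not\ge^* y_n$. Using the $\gs$-directedness of $F$, choose $y\in F$ with $y_n\le^* y$ for every $n$. The construction of the $\dot g_n$'s is engineered so that these per-stage failures assemble into a global failure: given any proposed extension of $p$ deciding $\dot g$ eventually dominates $y$, one can pick $n$ large enough that the $\ga_n$-level restriction already forces $\dot g_n\ge^* y_n$, contradicting the choice of $y_n$. This closes the induction. The hardest point is precisely the engineering of $\{\dot g_n\}_n$ so that the per-stage failures of $\dot g_n\ge^* y_n$ lift to the global failure of $\dot g\ge^* y$; this is where finite support c.c.c.\ and the $\gs$-directedness hypothesis on $F$ interact essentially and where the hypothesis $\mathrm{cf}(\gd)=\gw$ is genuinely used via the cofinal sequence $\ga_n$.
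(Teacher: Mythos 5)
First, note that the paper contains no proof of this statement: it is quoted as Fact~\ref{iterationfact} directly from \cite[Lemma 6.5.3]{bartoszynski:set}, so there is no in-paper argument to compare against. Your overall architecture (induction on $\ga$, trivial successor step, reflection of names into a proper initial segment at limits of uncountable cofinality, real work at limits of countable cofinality) does match the standard proof of that lemma, and you correctly identify where c.c.c., finite supports, and the $\gs$-directedness of $F$ must enter.

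However, the countable-cofinality step, which you yourself flag as the crux, is not merely unfinished but mis-specified in two ways. First, your ``design criterion'' for $\dot g_n$ --- that a dense set of conditions below $p$ in $P_\gd$ force $\dot g(k)\leq\dot g_n(k)$ for all large $k$ --- amounts to $p\Vdash_{P_\gd}\dot g\leq^*\dot g_n$ with $\dot g_n$ a $P_{\ga_n}$-name, i.e.\ to the assertion that the tail of the iteration beyond $\ga_n$ adds no function unbounded over $V^{P_{\ga_n}}$. That fails in exactly the situations the lemma is designed for (e.g.\ a finite support iteration of Cohen forcing, where stage $\gd$ adds a Cohen, hence unbounded, real over every $V^{P_{\ga_n}}$), so no choice of $\dot g_n$ can meet your criterion. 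The workable construction is different: fix in $V$ maximal antichains $A_k=\{q^k_i\colon i\in\gw\}$ below $p$ in $P_\gd$ deciding $\dot g(k)=m^k_i$, and let $\dot g_n(k)$ be $m^k_i$ for the \emph{least} $i$ such that $q^k_i\restriction\ga_n$ lies in the $P_{\ga_n}$-generic filter (total because these restrictions are predense below $p$ in $P_{\ga_n}$). One then never claims $\dot g\leq^*\dot g_n$ is forced; one only uses that whenever $y(k)>\dot g_n(k)$, the witnessing $q^k_i$ has its restriction in the filter and is therefore --- by the finite-support fact that a condition of $P_{\ga_n}$ is compatible in $P_\gd$ with $r$ iff it is compatible with $r\restriction\ga_n$ --- compatible with the given $q\in P_{\ga_n}$, yielding an extension of $q$ forcing $\dot g(k)<y(k)$ and hence the contradiction. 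Second, you invoke the inductive hypothesis only in its weak form (``there is $y_n\in F$ and \emph{an extension} of $p\restriction\ga_n$ forcing $y_n\not\leq^*\dot g_n$''), but the assembly needs the strong form $p\restriction\ga_n\Vdash y_n\not\leq^*\dot g_n$: the condition $q\leq p$ witnessing domination of the final $y$ is handed to you afterwards, so the failure must be forced below all of $p\restriction\ga_n$, not below some extension you chose in advance. Passing from the weak to the strong form is itself a small lemma --- take a maximal antichain of conditions each carrying its own witness and amalgamate the countably many witnesses using c.c.c.\ together with the $\gs$-directedness of $F$ --- and this second, essential use of $\gs$-directedness is absent from your sketch.
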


Now we are ready to prove Theorem~\ref{main1theorem}. Suppose that $G$ is a closed graph on a compact metrizable space $X$ with countable loose number. The poset $P_G$ is $\gs$-liminf-centered by Theorem~\ref{equitheorem}, therefore c.c.c.\ and does not add dominating reals. By Fact~\ref{abso1fact} and Theorem~\ref{btheorem}, the poset $P_G$ remains c.c.c.\ and preserves all increasing unbounded sequences of regular length $>\gw_1$ in all forcing extensions. Pick regular cardinals $\gw_1<\kappa<\lambda$. First, use a c.c.c.\ forcing of size $\kappa$ to add a sequence $F\colon\kappa\to\baire$ which consists of increasing functions in $\baire$ and it is modulo finite increasing and unbounded. Then, iterate the poset $P_G$ $\lambda$-many times with finite support. By Fact~\ref{abso1fact}, the iterands are c.c.c.\ and so is the whole iteration. By Theorem~\ref{btheorem}, the iterands preserve the unboundedness of the sequence $F$, and so does the whole iteration by Fact~\ref{iterationfact}.

In the resulting model, the bounding number $\mathfrak{b}$ is $\leq\kappa$, since the sequence $F$ is unbounded there. It is also true that $\kappa(G)\geq\lambda$: whenever $A\subset X$ is a set of size $<\lambda$, by a chain condition argument there is an ordinal $\ga<\lambda$ such that the set $A$ belongs to the model obtained after the $\ga$-th stage of the iteration. Then, look at the generic compact $G$-anticliques $K_n\subset X$ for $n\in\gw$ obtained at the respective $\ga+n$-th stages of the iteration. A genericity argument shows that $A\subset\bigcup_nK_n$ must hold, and so the set $A$ is not a witness to $\kappa(G)<\lambda$.

The method of proof brings up the following question:

\begin{question}
Suppose that $G$ is a closed graph on a compact metrizable space $X$. Let $\lambda$ be a regular cardinal such that $\lambda^{\aleph_0}=\lambda$. If $\mathfrak{b}<\kappa(G)$ holds in some extension, does
$\aleph_1=\mathfrak{b}<\kappa(G)=\lambda$ hold in some extension?
\end{question}

\noindent In view of Theorem~\ref{uniquenesstheorem}, this question really asks whether the preservation of unbounded sequences of length $\gw_1$ by Borel c.c.c.\ forcings is suitably absolute in ZFC.

\section{Examples}
\label{examplesection}

It is not entirely easy to come up with closed graphs for which the concepts introduced in Section~\ref{singlesection} exhibit nontrivial interplay.  This section is devoted to a number of examples that illustrate the various fault lines.

One class of closed graphs is generated by sequences of continuous functions. If $X$ is a compact metrizable space and $f_n\colon X\to X$ are continuous functions such that for each point $x\in X$ the values $f_n(x)$ for $n\in\gw$ converge to $x$, one can consider the \emph{associated graph} $G$ on $X$ which connects points $x\neq y$ just in case there is $n\in\gw$ such that $f_n(x)=y$ or $f_n(y)=x$. The assumptions on the sequence of functions easily imply that the graph $G$ is closed. Such graphs have coloring number $\leq\aleph_1$ (just orient the edges from $x$ to $f_n(x)$) and as such cannot contain cliques of size $\aleph_2$, and by an absoluteness argument they cannot contain any perfect cliques.
 Our first example, separating the uncountable chromatic number of $G$ from the existence of perfect cliques in $G$, belongs to this class:

\begin{example}
\cite{todorcevic:examples}
Let $\langle a_n\colon n\in\gw\rangle$ be pairwise disjoint subsets of $\gw$, for each $n\in\gw$ let $g_n\in\baire$ be the increasing enumeration of $a_n\cup n$, and let $f_n\colon\cantor\to\cantor$ be the continuous function defined by $f_n(x)=x\circ g_n$. The associated closed graph $G$ has no perfect cliques and uncountable chromatic number. Thus the poset $P_G$ is c.c.c.\ but not $\gs$-centered.
\end{example}

\noindent We do not know if $\kappa(G)\leq\mathfrak{b}$ holds in ZFC for the above graph.

\begin{proof}
Let $B_n$ for $n\in\gw$ be $G$-anticliques; we must find a point $x\in\cantor\setminus\bigcup_nB_n$. For this purpose, by induction on $n\in\gw$ build binary strings $t_n\in\bintree$, numbers $m_n\in\gw$ and points $x_n, y_n\in\cantor$ such that

\begin{itemize}
\item $0=t_0\subset t_1\subset\dots$ and $t_n\subset y_n, x_n$;
\item either $B_n\cap [t_{n+1}]=0$ or else $y_n\in B_n$;
\item there is $x_n\in [t_n]$ such that for every $i\in n$, $f_{m_i}(x_n)=y_n$.
\end{itemize}

\noindent Once the induction is performed, let $x=\bigcup_nt_n\in\cantor$. By the continuity of the functions $f_m$ for $m\in\gw$ it follows from the third item of the induction hypothesis that $f_{m_i}(x)=y_i$ for all $i\in\gw$. From the second item of the induction hypothesis, it follows that $x\notin B_i$ for any $i\in\gw$ as required. 

To perform the induction, start with $t_0=0$ and $x_0\in\cantor$ arbitrary. Now suppose that $t_n, x_n$ as well as $y_i$ and $m_i$ for $i\in n$ have been found. Find a natural number $m_n$ greater than all of $m_i$ for $i\in n$ and $|t_n|$ and let $t_{n+1}=x\restriction m_n$. The construction now splits into two cases. If $B_n\cap [t_{n+1}]=0$ then let $x_{n+1}=x_n$ and $y_n=f_{m_n}(x)$; this successfully completes the induction step in this case. Otherwise, pick a point $y_n\in B_n\cap [t_{n+1}]$ and let $x_{n+1}\in\cantor$ be the point which is equal to $x_n$ except at the entries in the set $a_{m_n}$ where it satisfies $y_n=x_{n+1}\circ g_{m_n}$. This completes the induction step.
\end{proof}

Our second example is again generated by a countable collection of continuous maps. This time, it separates the countable chromatic number from the countable loose number:

\begin{example}
\label{easyexample}
For every natural number $n\in\gw$, let $f_n\colon\cantor\to\cantor$ be the function defined by $f_n(x)(i)=x(i)$ if $i\in n$ and $f_n(x)(i)=0$ if $i\notin n$. The associated closed graph $G$ on $\cantor$ has countable chromatic number, its loose number is equal to $\mathfrak{d}$ and $\kappa(G)=\mathfrak{b}$, provably in ZFC.
\end{example}

\begin{proof}
To see that the chromatic number of the graph $G$ is countable, let $A\subset\cantor$ be the countable set of all binary sequences which are eventually zero. The set $\cantor\setminus A$ is a $G$-anticlique by the definition of the graph $G$, and so $\cantor$ can be written as a countable union of (even Borel) $G$-anticliques: $\cantor=A\cup\bigcup_{x\in A}\{x\}$. For the evaluation of the other cardinal invariants, a claim will be helpful:

\begin{claim}
\label{lilclaim}
If $B\subset\cantor$ is a $G$-loose set then $\bar B\setminus A\subset\cantor$ is a closed subset of $\cantor$.
\end{claim}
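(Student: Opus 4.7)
The plan is to fix $y$ in the closure of $\bar B\setminus A$ and argue that $y\notin A$; membership in $\bar B$ is automatic since $\bar B\setminus A\subset \bar B$. The key observation about the graph $G$ associated to these particular $f_n$ is that every edge has at least one endpoint in $A$: indeed, $f_n(x)$ is always eventually zero, so if $f_n(x)=y$ or $f_n(y)=x$ then one of $x,y$ lies in $A$. More useful for us, if $y\in A$ with $y(i)=0$ for all $i\geq k$, then for any $m\geq k$ and any $z\in [y\restriction m]$ with $z\neq y$, the string $f_m(z)$ agrees with $z$ on $[0,m)$ and is zero afterwards, so $f_m(z)=y$, and hence $z$ is $G$-connected to $y$.

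Suppose for contradiction that $y\in A$. Using $G$-looseness of $B$ at $y$, fix an open neighborhood $O$ of $y$ containing no element of $B$ that is $G$-connected to $y$. Pick $m\geq k$ large enough that the basic cylinder $[y\restriction m]$ is contained in $O$. Because $y$ lies in the closure of $\bar B\setminus A$, the open set $[y\restriction m]$ meets $\bar B\setminus A$; choose some $z\in \bar B\setminus A$ with $z\in [y\restriction m]$. Since $z\notin A$ but $y\in A$, we have $z\neq y$, so $\delta:=d(z,y)>0$.

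Now since $z\in \bar B$ and $[y\restriction m]$ is open, we can approximate $z$ by a point $w\in B$ with $d(w,z)<\min(\delta/2,\mathrm{dist}(z,\cantor\setminus[y\restriction m]))$. Then $w\in [y\restriction m]\subset O$ and $w\neq y$, so by the key observation above $w$ is $G$-connected to $y$. This contradicts the choice of $O$.

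Thus $y\notin A$, proving that $\bar B\setminus A$ is closed. The only mildly delicate step is the last one, where we must pass from a point $z$ of $\bar B$ to an actual point $w$ of $B$ without falling onto $y$; this is where the strict inequality $z\neq y$ (forced by $z\notin A$, $y\in A$) is essential.
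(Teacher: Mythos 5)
Your proof is correct and follows essentially the same route as the paper's: both arguments reduce to the observation that any point of $[y\restriction m]$ other than an eventually-zero $y$ is sent to $y$ by $f_m$ (hence is $G$-connected to $y$), and both must pass from a point of $\bar B\setminus A$ to a nearby point of $B$ distinct from $y$ to contradict looseness. The paper phrases this with a sequence $y_n\in B$ converging to $y$ rather than a single neighborhood, but the content is identical; your write-up just makes the "delicate step" explicit where the paper leaves it implicit.
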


\begin{proof}
 If the conclusion fails, there must be a point $x\in A$
and points $y_n\in B$ for $n\in\gw$ such that $y_n\neq x$ and $\lim_ny_n=x$. A review of the definition of the graph $G$ shows that all but finitely many points $y_n$ are $G$-connected with $x$, showing that $B$ is not $G$-loose.
\end{proof}

Now, to see that $\lambda(G)\geq\mathfrak{d}$, suppose that $\cantor=\bigcup_{i\in I}B_i$ is a union of $G$-loose sets. By Claim~\ref{lilclaim}, $\cantor\setminus A=\bigcup_{i\in I}\bar B_i\setminus A$ is a union of compact sets. Since the set $\cantor\setminus A$ is homeomorphic to the Baire space, $|I|\geq\mathfrak{d}$ immediately follows. To see that $\lambda(G)\leq\mathfrak{d}$, just write $\cantor\setminus A$ as a union of $\mathfrak{d}$ many compact sets. All of these sets are $G$-anticliques and therefore also $G$-loose sets. Adding the singletons from the countable set $A$ to the cover, we get a cover of $\cantor$ by $\mathfrak{d}$ many $G$-loose sets.

To see that $\kappa(G)\leq\mathfrak{b}$, use the fact that $\cantor\setminus A$ is homeomorphic to the Baire space again to find a set $B\subset\cantor\setminus A$ of size $\mathfrak{b}$ which cannot be covered by countably many compact subsets of $\cantor\setminus A$. By Claim~\ref{lilclaim}, it cannot be covered by countably many loose $G$-sets, in particular by countably many compact $G$-anticliques. To see that $\kappa(G)\geq\mathfrak{b}$, if $B\subset\cantor$ cannot be covered by countably many compact $G$-anticliques, then $B\cap (\cantor\setminus A)$ cannot be covered by countably many compact subsets of $\cantor\setminus A$ as desired.
\end{proof}

Another interesting family of closed graphs arises from metrics on compact metrizable spaces. Let $X$ be a compact metrizable space with a compatible metric $d$. Let $\langle r_n\colon n\in\gw\rangle$ be a sequence of positive real numbers converging to $0$. The \emph{associated graph} connects points $x, y\in X$ if $d(x, y)=r_n$ for some $n\in\gw$. In this class of closed graphs, perfect cliques are possible in zero-dimensional spaces. For example, if $d$ is the usual least difference metric on $\cantor$, then the set of all possible values of $d$ forms a sequence converging to $0$ and so even the whole space may be a clique in this case.

The most interesting representatives of the metric generated graphs are connected with the Euclidean metrics:

\begin{example}
\textnormal{\cite[Theorem 7]{komjath:list}}
Let $n\leq 3$ be a natural number, let $d$ be the Euclidean metric on $[0, 1]^n$, let $\langle r_n\colon n\in\gw\rangle$ be a sequence of positive real numbers converging to $0$, and let $G$ be the associated closed graph on $[0, 1]^n$. The graph $G$ has countable coloring number.
\end{example}

\noindent The dimensions higher than $3$ surprisingly yield an example separating the countable coloring number from the countable loose number:

\begin{example}
\textnormal{\cite{schmerl:list}}
Let $n>3$ be a natural number, let $d$ be the Euclidean metric on $X=[0, 1]^n$, let $\langle r_n\colon n\in\gw\rangle$ be a sequence of positive real numbers converging to $0$, and let $G$ be the associated closed graph on $[0, 1]^n$. The graph $G$ has uncountable coloring number but countable loose number.
\end{example}

\begin{proof}
To see that the coloring number is uncountable, choose a number $r\in\R$ such that for some $n\in\gw$ $\sqrt{2r}=r_n$ and $r<1$, and consider the sets $A_0=\{\langle x, y, 0, 0\dots\rangle\in X\colon x^2+y^2=r\}$
and $A_1=\{\langle 0, 0, x, y, 0, 0\dots\rangle\in X\colon x^2+y^2=r\}$. These are uncountable sets such that each point of $A_0$ is $G$-connected with each point in $A_1$. Such sets cannot exist in graphs of countable coloring number, say by Theorem~\ref{maintheorem} below.

To see that the loose number of $G$ is countable, note that \cite{schmerl:list} found a well-ordering $\leq$ of $X$ such that for every point $x\in X$, the set $\{y\in X\colon y\leq x\land \langle x, y\rangle\in G\}$ is bounded away from $x$. For every number $m\in\gw$, let $A_m=\{x\in X\colon\forall y\leq x\ \langle x, y\rangle\in G\to d(x, y)>2^{-m}\}$. The choice of the well-ordering $\leq$ implies that $\bigcup_mA_m=X$; thus, it will be enough to argue that each set $A_m$ is $G$-loose. To see this, suppose towards a contradiction that $z\in X$ is a point and $\langle x_i\colon i\in\gw\rangle$ is a sequence of points in $A_m$ $G$-connected to some point $z$. Let $\eps>0$ be a real number such that for all $y\leq z$ which is $G$-connected to $z$, $d(y, z)>\eps$. Then, for every $i\in\gw$, if $x_i\leq z$ then $d(x_i, z)>\eps$, and if $x_i\geq z$ then $d(x_i, z)>2^{-m}$, showing that the sequence is bounded away from the point $z$.
\end{proof}

\noindent The metric spaces of finite dimension behave quite differently in this respect than the infinite dimensional ones \cite{engelking:dimension}. The dimension fault line appears in the following example:

\begin{example}
Let $X$ be a strongly infinite-dimensional compact metrizable space and let $d$ be a compatible metric. Let $\langle r_n\colon n\in\gw\rangle$ be a sequence of positive reals converging to $0$, and let $G$ be the associated metric graph. The graph $G$ has uncountable chromatic number.
\end{example}

\noindent In fact, in the usual metrizations of the Hilbert cube for example, this graph contains perfect cliques. We do not know if it is possible to have a strongly infinite-dimensional space with a compatible metric such that the associated metric graph has no perfect cliques.

\begin{proof}
The argument starts with an auxiliary claim:

\begin{claim}
Let $K\subset X$ be a set, $\eps>0$, and $B\subset X$ be a $G$-anticlique. There is an open set $O\subset X$
such that $K\subset O$, every point of $O$ is within $\eps$-distance from some point in $K$, and the boundary of $O$ has empty intersection with $B$.
\end{claim}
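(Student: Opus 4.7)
The plan is to produce $O$ as one side of a partition of $X$ separating $\bar K$ from $X\setminus U$, where $U:=\{x\in X:d(x,K)<\eps\}$ is the open $\eps$-neighborhood of $K$, chosen so that the partition meets $B$ trivially. The metric graph structure together with the anticlique condition on $B$ will furnish the precise local property needed to invoke a classical separation result from dimension theory.

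\emph{Reduction.} Replacing $K$ by $\bar K$ (which still sits inside $U$, since every point of $\bar K$ has zero distance to $K$), I may assume $K$ is closed and hence compact. Set $F:=X\setminus U$, so that $K$ and $F$ are disjoint closed subsets of $X$.

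\emph{The local property.} For every $b\in B$ and every $n\in\gw$ the open ball $V_n(b):=\{y\in X:d(b,y)<r_n\}$ has topological frontier contained in the sphere $\{y:d(b,y)=r_n\}$. Any $y\neq b$ on this sphere lying in $B$ would witness the edge $\{y,b\}\in G$ inside $B$, contradicting that $B$ is a $G$-anticlique, while $b$ itself lies in the interior of $V_n(b)$. Hence $\partial V_n(b)\cap B=\emptyset$, and since $r_n\to 0$, each point of $B$ admits arbitrarily small open neighborhoods in $X$ whose topological boundaries miss $B$.

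\emph{The partition.} I then invoke the classical dimension-theoretic separation principle: a subset $B$ of a normal space $X$ enjoying the local property of the previous paragraph has, in Engelking's terminology, large inductive dimension zero in $X$, so every pair of disjoint closed subsets of $X$ can be separated by a partition disjoint from $B$. Applying this to the pair $(K,F)$, I obtain a closed set $P\subset X$ disjoint from $B$ together with open disjoint $O_0,O_1\subset X$ satisfying $K\subset O_0$, $F\subset O_1$, and $X=O_0\cup O_1\cup P$. Setting $O:=O_0$ gives $K\subset O$, $O\cap F=\emptyset$ (so every point of $O$ sits within $\eps$ of $K$), and $\partial O\subset P$, whence $\partial O\cap B=\emptyset$.

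\emph{The main obstacle.} The technical heart is the separation step: one must produce the global $B$-avoiding partition from the local $B$-avoiding ball property. Since $X$ is compact metric, one can extract by Lindel\"of a countable cover of $B\cap\bar U$ by balls $V_{n_i}(b_i)$ each with boundary disjoint from $B$, and then iteratively shrink $\{x:d(x,K)<\eps/2\}$ by removing closures of nested subfamilies of these balls, maintaining at every stage that the resulting open set contains $K$, lies inside $U$, and presents no element of $B$ on its evolving boundary. The bookkeeping needed to ensure the limit open set meets all three conditions simultaneously is the crux, and is where care must be taken that closures of countably many small balls do not accumulate onto a stray point of $B$ or chip away a point of $K$.
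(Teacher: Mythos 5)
Your route is genuinely different from the paper's and it can be made to work, but as written it stalls at exactly the point you yourself label the crux. The local property you isolate is correct and is the same key observation the paper uses: for $b\in B$ the sphere $\{y\colon d(b,y)=r_n\}$ consists of points $G$-connected to $b$, so it misses the anticlique $B$, and hence $b$ has arbitrarily small balls whose boundaries avoid $B$; consequently each $V_n(b)\cap B$ is relatively clopen in $B$ and $B$ is zero-dimensional as a subspace of $X$. From there the first separation theorem of dimension theory (for a zero-dimensional subspace $M$ of a separable metric space, any two disjoint closed sets admit a partition disjoint from $M$; Theorem 1.2.11 in Engelking's \emph{Dimension Theory}) applied to $\bar K$ and $X\setminus U$ finishes the argument exactly as in your third paragraph. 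If you lean on that citation, your closing paragraph is superfluous; if instead you mean to prove the separation yourself, that paragraph is not a proof --- it describes bookkeeping ``where care must be taken'' without carrying it out, and an infinite shrinking process really can accumulate boundary points onto $B$ if done carelessly, so it cannot be waved through. You should commit to one or the other. The paper sidesteps the dimension-theoretic machinery entirely: it covers the compact set $\bar K$ by finitely many open balls of radius $<\eps/2$, each either having closure disjoint from $B$ or else centered at a point of $B$ with radius equal to some $r_n$ (so that its spherical boundary misses $B$ by the anticlique property), and takes $O$ to be the finite union; since the boundary of a finite union is contained in the union of the boundaries, all three required properties are immediate. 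Your approach buys uniformity (no compactness of $K$ is needed, only that $X$ is separable metric) at the cost of importing a separation theorem; the paper's buys a three-line verification from a finite subcover.
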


\begin{proof}
Replacing $K$ with its closure we may assume that $K$ is compact. Every point $x\in K$ is contained either (1) in some open ball of radius $<\eps/2$ whose closure contains no elements of $B$, or (2) in some open ball of radius $<\eps/2$ whose center is in $B$ and whose radius belongs to the set $\{r_n\colon n\in\gw\}$. By a compactness argument, the whole set $K$ is covered by finitely many balls of this type. Let $O$ be the union of the finitely many balls. It is clear that $K\subset O$ and every point of $O$ is within $\eps$-distance from some point in $X$. Finally, the boundary of the set $O$ is a subset of the union of the boundaries of the finitely many balls in the union, and none of them contain any elements of the anticlique $B$: in case (1) this occurs because the closure of the whole ball contains no elements of the set $B$, and in case (2), this occurs because $B$ is a $G$-anticlique and the points on the boundary of the ball are $G$-related to the center of the ball which belongs to $B$.
\end{proof}

Now, suppose that $B_n$ for $n\in\gw$ are $G$-anticliques; we must produce a point $x\in X\setminus\bigcup_nB_n$. Use the infinite dimensionality of the space $X$ to find an essential sequence $\langle K_n^0, K_n^1\colon n\in\gw\rangle$; that is, $K_n^0, K_n^1$ are disjoint nonempty subsets of $X$ for each $n$, and whenever $O_n\subset X$ for $n\in\gw$ are open sets such that $K_n^0\subset O_n$ and $\bar O_n\cap K_n^1=0$ then the intersection of the boundaries of the sets $O_n$ is nonempty.
Now, use the claim to find, for each number $n\in\gw$, an open set $O_n\subset X$ such that $K_n^0\subset O_n$ and $\bar O_n\cap K_n^1=0$ and the boundary of the set $O_n$ has empty intersection with the anticlique $B_n$. By essentiality, the intersection of the boundaries of the sets $O_n$ is nonempty, and any point in it belongs to $X\setminus\bigcup_nB_n$ as desired.
\end{proof}

Finally, we owe the reader an example showing that the conclusion of Theorem~\ref{main1theorem} fails if one considers graphs only slightly more complicated than closed:

\begin{example}
Let $G$ be the graph on $\cantor$ connecting sequences $x, y$ if they differ on at most finitely many entries. The graph $G$ is locally countable and therefore has countable coloring number. At the same time, $\kappa(G)\leq\mathfrak{b}$ holds in ZFC.
\end{example}

\noindent It is well-known that each measurable $G$-anticlique must have zero $\mu$-mass, where $\mu$ is the Haar probability measure on $\cantor$. Thus, no set of positive outer $\mu$-mass can be covered by countably many $G$-anticliques. This shows that $\kappa(G)\leq\mathtt{non(null)}$ holds in ZFC, in particular $\kappa(G)<\mathfrak{b}$ is consistent with ZFC.

\begin{proof}
Let $A\subset\cantor$ be the countable set of binary sequences which are eventually zero, so that $\cantor\setminus A$ is homeomorphic to the Baire space. Let $F\subset\cantor\setminus A$ be a set of size $\mathfrak{b}$ which cannot be covered by countably many compact subsets of $\cantor\setminus A$. We claim that $F$ cannot be covered by countably many $G$-anticliques.

Indeed, if $K\subset\cantor$ is a compact $G$-anticlique, then it intersects the set $A$ in at most one point, and so $K\setminus A$ is a union of countably many compact sets. It follows that if $F$ were covered by countably many $G$-anticliques, it would be covered by countably many compact subsets of $\cantor\setminus A$, which is impossible by the choice of the set $F$.
\end{proof}

\section{A dichotomy}
\label{dichotomysection}

The purpose of this section is to characterize those analytic graphs on Polish spaces for which the coloring number is countable. The evaluation of the coloring number for finite or infinite graphs is a fairly involved business, see \cite{alon:list}. Even in the case of a closed or analytic graph, the orientations witnessing the coloring number are typically obtained through a heavy use of the axiom of choice. However, the existence of such an orientation can be characterized by a simple formula. It turns out that there is a minimal analytic graph of uncountable coloring number, which is in addition a clopen graph on a (noncompact) $\gs$-compact Polish space:

\begin{definition}
Let $Y$ be the Polish space which is the disjoint union of $2^{<\gw}$, viewed as a discrete space, and $\cantor$ with its usual topology. $G_1$ is the graph on $Y$ given by $G_1=\{\{y\restriction n, y\}\colon y\in\cantor, n\in\gw\}$.
\end{definition}

\begin{theorem}
\label{maintheorem}
Let $G$ be an analytic graph on a Polish space $X$. The following are equivalent:

\begin{enumerate}
\item the coloring number of $G$ is countable;
\item for every countable set $a\subset X$, the set $\{x\in X\colon\exists^\infty y\in a\ x\mathrel{G}y\}$ is countable;
\item there is no continuous injective homomorphism of $G_1$ to $G$.
\end{enumerate}

\noindent If there is a proper class of Woodin cardinals then the conclusion holds for all universally Baire graphs.
\end{theorem}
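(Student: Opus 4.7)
The plan is to prove the equivalence via the cycle $(1)\Rightarrow(2)\Rightarrow(3)\Rightarrow(1)$, with the genuine content concentrated in $(3)\Rightarrow(1)$.

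The implications $(1)\Rightarrow(2)$ and $(2)\Rightarrow(3)$ are routine. For $(1)\Rightarrow(2)$, fix an orientation $o$ witnessing $\mu(G)\leq\aleph_0$, so every vertex has finite $o$-outflow. Given a countable $a\subset X$ and any $x$ with infinitely many $G$-neighbors in $a$, only finitely many of those edges can be oriented outward from $x$, so infinitely many must come from some $y\in a$, placing $x$ in $\bigcup_{y\in a}o\text{-outflow}(y)$, a countable union of finite sets. For $(2)\Rightarrow(3)$, contrapose: a continuous injective homomorphism $f\colon Y\to X$ of $G_1$ to $G$ produces a countable $a:=f(2^{<\omega})$ and uncountably many $f(y)$, $y\in 2^\omega$, each with infinitely many $G$-neighbors $f(y\restriction n)$ in $a$, directly contradicting (2).

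For the dichotomy $(3)\Rightarrow(1)$, I would argue contrapositively in two steps. Step A: assuming $\mu(G)>\aleph_0$, I would produce a countable $a=\{a_n\}\subset X$ with the analytic set $A^*:=\{x:|\{n:a_n\mathrel{G}x\}|=\infty\}=\bigcap_k\bigcup_{n\geq k}G^{-1}(a_n)$ uncountable. This reduction uses analyticity essentially: I would attempt to stratify $X$ by a chain of countable sets closed under $B(a):=\{x:\exists^\infty y\in a,\ x\mathrel{G}y\}$, iterating $B$ from countable seeds. A $\Sigma^1_1$-reflection argument (or boundedness of $\Pi^1_1$-ranks naturally associated to $B$-iteration in an analytic graph) should show that each $B$-closure stabilizes at a countable ordinal, preserving countability. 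Successful stratification would yield an orientation with finite back-neighborhoods, witnessing (1); failure produces the desired uncountable $A^*$.

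Step B: given $(a,A^*)$ with $A^*$ uncountable analytic, I would construct the homomorphism by a Cantor fusion. Build a Cantor scheme $\langle U_s:s\in 2^{<\omega}\rangle$ of basic open subsets of $X$ with $\bar U_{s0}\cup\bar U_{s1}\subset U_s$ disjoint and $\diam(U_s)\to 0$, together with an injection $k\colon 2^{<\omega}\to\omega$, maintaining that $A_s^*:=A^*\cap U_s\cap\bigcap_{s'\preceq s}G^{-1}(a_{k(s')})$ is uncountable at every $s$. At the inductive step, $A_s^*\cap U_{si}$ is uncountable (by the perfect set property for the analytic set $A_s^*$) and contained in $\bigcup_{n>N}G^{-1}(a_n)$ for every threshold $N$ (since each $x\in A^*$ has $a_n$-neighbors with $n$ arbitrarily large); hence some $n>N$ must make $A_s^*\cap U_{si}\cap G^{-1}(a_n)$ uncountable, and I choose distinct $k(s0),k(s1)$ above all previously used $k$'s accordingly. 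Setting $f(s):=a_{k(s)}$ and $f(y):=\bigcap_n U_{y\restriction n}$ yields continuity and injectivity; the edge-preservation $f(y)\mathrel{G}f(y\restriction n)$ follows because $f(y)$ lies in the closure of $A_{y\restriction n}^*\subset G^{-1}(a_{k(y\restriction n)})$, which for analytic $G$ requires threading a tree representation of $G$ alongside the fusion so that witnessing branches converge at the limit.

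For the universally Baire case under a proper class of Woodin cardinals, both steps should generalize via Feng-Magidor-Woodin-style theorems: uB sets enjoy the perfect set property, uniform tree representations absolute across generic extensions, and $\Sigma^2_1$-generic absoluteness, which together substitute for the analytic-specific tools above. The hard part will be Step A: for combinatorial graphs lacking descriptive structure, uncountable coloring need not be witnessed by a countable set (Shelah-type examples), so the reduction must use analyticity (or uB-ness) essentially, through either an effective rank analysis of $B$-iteration or a forcing-absoluteness detour. Step B is standard modulo the care needed to ensure that limits of analytic witnesses remain in the target analytic sets, which is handled via the tree representations.
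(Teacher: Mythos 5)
Your overall architecture agrees with the paper's: the cycle $(1)\Rightarrow(2)\Rightarrow(3)\Rightarrow(1)$, with $(3)\Rightarrow(1)$ split into $\neg(1)\Rightarrow\neg(2)$ and $\neg(2)\Rightarrow\neg(3)$, is just a repackaging of the three implications $(1)\Rightarrow(2)$, $(2)\Rightarrow(1)$, $(2)\Leftrightarrow(3)$ that the paper proves. Your $(1)\Rightarrow(2)$ is correct (and more direct than the paper's elementary-submodel version), and your Step B is essentially the paper's Cantor-scheme construction; the only wrinkle there is the limit step, which you correctly flag --- the paper sidesteps the tree-representation threading by shrinking, at each node $t$, to a \emph{perfect} set $C_t$ consisting entirely of points already $G$-connected to $x_t$, so that the limit point lies in $C_{y\restriction n}\subset G^{-1}(x_{y\restriction n})$ itself rather than merely in its closure.

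The genuine gap is Step A, i.e.\ the contrapositive of $(2)\Rightarrow(1)$, which is where the real content of the theorem lives, and your sketch does not contain the idea that makes it work. Stratifying $X$ by a chain of \emph{countable} sets closed under $B$ only covers a set of size at most $\aleph_1$, so it cannot produce an orientation of all of $G$ when $|X|=2^{\aleph_0}>\aleph_1$; and hypothesis (2) by itself says nothing about uncountable sets, so closure under $B$ for the larger sets one would need in such a chain yields no finiteness of back-neighborhoods. What is needed --- and what the paper proves as its key Claim --- is that (2) implies: for \emph{every} elementary submodel $M$ of a large structure containing $G$ and $X$, with no cardinality restriction, every $x\in X\setminus M$ has only finitely many $G$-neighbors in $X\cap M$. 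This is exactly where analyticity enters: one forces with $\coll(\gw, X\cap M)$ to make $X\cap M$ countable, transfers the coanalytic statement (2) into $M[H]$ by Mostowski absoluteness, extracts there a countable sequence $z$ enumerating the exceptional points, and transfers back to conclude that any $x\in V\setminus M$ (hence $x\notin M[H]$, hence $x\notin\rng(z)$) has finite trace on $X\cap M$. The orientation is then built by transfinite induction on $|A|$ along a continuous chain of elementary submodels, orienting each edge toward the vertex appearing earlier in the chain. Your phrase ``forcing-absoluteness detour'' points in the right direction, but no version of ``$\Sigma^1_1$-reflection on $B$-iteration from countable seeds'' will close this cardinality gap; without the submodel claim the argument does not go through.
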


\begin{proof}
We will deal with the case of analytic graphs; the case of universally Baire graphs under a large cardinal assumption is left to the reader, as it needs no additional tricks.

The (1)$\to$(2) direction does not use any definability assumptions on the graph $G$. If (1) holds and $a\subset X$ is a countable set, let $o$ be an orientation of $G$ in which the outflow of every point is finite, and let $M$ be a countable elementary submodel of a large structure containing both $a$ and $o$ as elements. Since the set $a$ is countable, it is also a subset of $M$ by elementarity, and it will be enough to show that for every point $x\in X\setminus M$, $x$ is connected with only finitely many elements of $X\cap M$. Indeed, if $x$ were connected with infinitely many points in $M$, then there would be a point $y\in M$ which is not in the finite $o$-outflow of $x$ and is connected with $x$. It follows that $x$ is in the outflow of $y$; but, as the outflow of $y$ is finite, this implies that $x\in M$ by the elementarity of the model $M$. A contradiction.

For the (2)$\to$(1) direction, assume that (2) holds and prove the following claim:

\begin{claim}
\label{bclaim}
For every submodel $M$ of a large enough structure containing the graph $G$ and the space $X$ as elements, for every point $x\in X\setminus M$, $x$ is $G$-connected with only finitely many elements of the model $M$.
\end{claim}

\noindent Note that there is no cardinality restriction on the submodel $M$.

\begin{proof}
Suppose that $M$ is an elementary submodel of some large structure containing the space $X$ and the graph $G$. Let $H\subset \coll(\gw, X\cap M)$ be a filter generic over $V$. One can then form the model $M[H]$, since $\coll(\gw, X\cap M)=\coll(\gw, X)\cap M$. Comparison of the models concerned yields the following:

\begin{itemize}
\item $M[H]\cap V=M$. This follows from the genericity of the filter $H$.
\item $M[H]\models$ for every countable set $a\subset X$, the set $\{x\in X\colon\exists^\infty y\in a\ x\mathrel{G}y\}$ is countable. This is because the given statement is coanalytic, true in $V$ and one can apply the Mostowski absoluteness to the wellfounded model $M[H]$.
\item $M[H]\models  X\cap M$ is countable, and by the previous item there is an $\gw$-sequence $z\in X^\gw$ in $M[H]$ such that
$M[H]\models \forall x\notin\rng(z)\ \{y\in X\cap M\colon x\mathrel G y\}$ is finite;
\item $V[H]\models \forall x\notin\rng(z)\ \{y\in X\cap M\colon x\mathrel G y\}$ is finite, since this property of the countable set $X\cap M$ and the sequence $z$ is coanalytic, and by the Mostowski absoluteness it can be transfered from $M[H]$ to $V[H]$.
\end{itemize}

Now, if a point $x\in X$ in $V\setminus M$ is an arbitrary point, then $x\notin M[H]$ by the first item, so $x\notin\rng(z)$, and by the last item $x$ is connected to only finitely many elements of $X\cap M$ as required in the claim.
\end{proof}

Now, by induction on the cardinality of a set $A\subset X$ argue that the graph $G$ restricted to $A$ can be oriented so that the outflow of every point in $A$ is finite. This is immediate if $|A|=\aleph_0$. Now suppose that $|A|=\kappa$ and the statement has been proved for all sets of size $<\kappa$. Choose a continuous increasing sequence $\langle M_\ga\colon\ga\in\mathtt{cf}(\kappa)\rangle$ of elementary submodels of a large structure such that $f, A\in M_0$, $|M_\ga|<\kappa$ for every $\ga$, and $A\subset\bigcup_\ga M_\ga$. Use the inductive assumption to find an orientation $o_\ga$ of the graph $G\restriction A\cap M_\ga$ for each ordinal $\ga\in\gb$ such that the outflow of any node is finite. Define an orientation $o$ of $G\restriction A$ by orienting an edge $\langle x, y\rangle$ towards $y$ just in case either the smallest ordinal $\ga$ for which $y$ appears in $M_\ga$ is smaller that the smallest ordinal $\ga$ for which $x$ appears in $M_\ga$, or in the case that these two ordinals are equal to some $\ga$, then $\langle x, y\rangle$ is oriented in the same way by $o$ as it is in $o_\ga$. Claim~\ref{bclaim} immediately implies that the $o$-outflow of any node in $A$ is finite as required.

Now, the negation of (3) implies the negation of (2) since in the graph $G_1$, all the non-isolated points of the underlying space are connected to infinitely many elements of the countable set of isolated points. To see how the negation of (2) implies the negation of (3), fix a countable set $a\subset X$ such that the set $B=\{x\in X\colon \exists^\infty y\in a\ \colon x\mathrel G y\}$ is uncountable. Since the set $B$ is analytic, it contains a perfect subset $C\subset B$ disjoint from $a$. Now, by induction on $t\in\bintree$ build perfect sets $C_t\subset C$ and points $x_t\in a$ such that

\begin{itemize}
\item $t\subset s$ implies $C_s\subset C_t$, the set $C_t$ has diameter $2^{-|t|}$ in some fixed complete compatible metric on $X$, and it is either disjoint from or a subset of the $|t|$-th basic open subset of $X$ in some fixed enumeration of a countable topology base for $X$;
\item for each $t\in\bintree$, the sets $C_{t^\smallfrown 0}$ and $C_{t^\smallfrown 1}$ are pairwise disjoint;
\item the points $x_t\in a$ are pairwise distinct and all points in $C_t$ are $G$-connected with $x_t$.
\end{itemize}

\noindent To perform the induction step, suppose that the sets $C_t$ and points $x_t$ for $t\in 2^{\leq n}$ have been constructed.
First, use standard arguments to find sets $C'_s$ for each $s\in 2^{n+1}$ which satisfy the first two items above. Now, enumerate $2^{n+1}$ as $\{s_i\colon i\in j\}$ and by subinduction on $i$ find points $x_{s_i}\in a$ such that they are pairwise distinct and also distinct from the points $x_t$ for $t\in 2^{\leq n}$, and such that there are uncountably many elements of the set $C'_{s_i}$ which are $G$-connected to the point $x_{s_i}$. The subinduction is easy to perform given the fact that the set $a$ is countable and every element of $C'_{s_i}$ is connected to infinitely many of its members. In the end, use the perfect set theorem to find a perfect set $C_{s_i}\subset C'_{s_i}$ of points connected to $x_{s_i}$. This completes the induction step.

Once the induction has been performed, consider the function $f\colon Y\to X$ given by the following description. If $t\in\bintree$ then $f(t)=x_t$ and if $y\in\cantor$ then $f(y)$ is the unique point in $\bigcap_nC_{y\restriction n}$. It is clear from the induction assumptions that $f$ is a continuous injection from $Y$ to $X$ which is moreover a homomorphism of the graph $G_1$ to $G$.
\end{proof}

\begin{corollary}
\label{listcorollary}
For an analytic graph $G$ on a Polish space $X$, the coloring number of $G$ is countable if and only if the list-chromatic number of $G$ is countable.
\end{corollary}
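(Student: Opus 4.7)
The plan is to prove both directions of the if-and-only-if. The forward direction (countable coloring number implies countable list-chromatic number) is a standard greedy argument: countable coloring number is equivalent to the existence of a well-ordering of $X$ in which every vertex has only finitely many earlier $G$-neighbors (this classical formulation matches the paper's orientation-based definition for infinite cardinal values, as noted before the definition of $\mu(G)$). Greedy coloring along such a well-ordering from any list assignment with countably infinite lists succeeds, since only finitely many colors are excluded at each step.

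For the backward direction, the plan is to argue the contrapositive. Assuming the coloring number of $G$ is uncountable, Theorem~\ref{maintheorem} (equivalence of (1) and (2)) supplies a countable set $a \subseteq X$ for which $B := \{x \in X : \{y \in a : x\mathrel{G} y\}\text{ is infinite}\}$ is uncountable. Since $G$ is analytic, so is $B$ (as a countable combination of the analytic slices $\{x : x\mathrel{G} y\}$ for $y \in a$), hence $B\setminus a$ is analytic and uncountable, and $|B\setminus a| = \mathfrak{c}$ by the perfect set theorem.

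Set $C := B\setminus a$, enumerate $a = \{t_n : n \in \omega\}$, and for each $y \in C$ fix an infinite $K_y \subseteq \omega$ with $\{t \in a : t\mathrel{G} y\} = \{t_k : k \in K_y\}$. Using $|C| = \mathfrak{c} = |\omega^\omega|$, fix a bijection $\phi \mapsto y_\phi$ from $\omega^\omega$ onto $C$. With colors drawn from $(a \times \omega) \cup \omega$ (disjoint copies), define
\[
L(t_n) = \{t_n\} \times \omega, \qquad L(y_\phi) = \{(t_k,\phi(k)) : k \in K_{y_\phi}\}, \qquad L(v) = \omega \text{ for } v \in X\setminus(a\cup C).
\]
Each list is countably infinite. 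If $c$ were a proper coloring of $G$ from $L$, then $c|_a$ would be encoded by some $\phi \in \omega^\omega$ via $c(t_n) = (t_n,\phi(n))$; then setting $y := y_\phi$, the $G$-neighbors of $y$ inside $a$ use exactly the colors $\{(t_k,\phi(k)) : k \in K_y\} = L(y)$, leaving no valid color in $L(y)$ for $c(y)$, a contradiction. Hence $L$ witnesses that the list-chromatic number of $G$ is uncountable.

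The hard part will be converting the qualitative statement of Theorem~\ref{maintheorem} into a concrete bad list assignment of the required kind. The key device is the use of disjoint $\omega$-copies as lists on the countable set $a$, which forces any $c|_a$ to code an arbitrary element of $\omega^\omega$, together with the bijection $\omega^\omega \leftrightarrow C$ that allows the lists on $C$ to diagonalize against every such code.
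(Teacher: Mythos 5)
Your proof is correct, but it routes through the dichotomy differently than the paper does. The paper reduces everything to the single minimal graph $G_1$: by clause (3) of Theorem~\ref{maintheorem}, uncountable coloring number yields a continuous injective homomorphism of $G_1$ into $G$, so it suffices to exhibit one bad list assignment on $G_1$, which the paper does by coding the functions $g\colon\bintree\to\gw$ with $g(t)>|t|$ by points of $\cantor$ and letting the list of $y\in\cantor$ be $\{E(y)(y\restriction n)\colon n\in\gw\}$. You instead invoke only the equivalence of (1) and (2), extract a countable set $a$ whose ``infinite-degree'' set $B$ is uncountable, apply the perfect set theorem to get $|B\setminus a|=\mathfrak{c}$, and build the bad assignment directly on $G$ by indexing $B\setminus a$ by $\baire$ and diagonalizing. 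The underlying diagonalization is the same --- points of the uncountable side code candidate colorings of the countable side, and each point's list is exactly the set of colors its coded coloring puts on its infinitely many neighbors in $a$ --- but your decomposition avoids the embedding clause (3) entirely and the transfer of a list assignment along the homomorphism, at the cost of invoking the perfect set theorem for analytic sets explicitly (the paper buries that use inside the proof of the dichotomy). Your disjoint-copies trick $L(t_n)=\{t_n\}\times\gw$ also makes the decoding of $c\restriction a$ cleaner than the paper's constraint $g(t)>|t|$. Two small points worth making explicit: the enumeration $a=\{t_n\colon n\in\gw\}$ should be taken one-to-one (and $a$ is automatically infinite, since otherwise $B$ would be empty), and the map $\phi\mapsto y_\phi$ must be injective so that each $y\in C$ receives a single well-defined list; your choice of a bijection handles both.
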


Here, the list-chromatic number of $G$ \cite{hajnal:chromatic} is the smallest cardinal $\kappa$ such that for every function $F$ which assigns each point $x\in X$ a set of size $\geq\kappa$, there is a function $f$ which assigns each point $x$ and element of $F(x)$ such that for any two $G$-connected points $x_0, x_1\in X$, $f(x_0)\neq f(x_1)$ holds. Komj{\' a}th \cite{komjath:list1}showed that consistently, the list-chromatic number of infinite graphs can be equal to the coloring number, and consistently, for all graphs of size $\aleph_1$, if the chromatic number is countable then so is the list-chromatic number. The theorem shows that no such antics appear in the case of analytic graphs and countable list-chromatic number.

\begin{proof}
Since for any graph $G$, the list-chromatic number is not greater than the coloring number, it is enough to show that the graph $G_1$ on the space $Y=\bintree\cup\cantor$ has uncountable list-chromatic number. For this, let $E$ be a (Borel) bijection between $\cantor$ and the set of all maps $g\colon \bintree\to\gw$ such that $\forall t\in\bintree\ g(t)>|t|$. Let $F$ be the function which to each $t\in\bintree$ assigns the set $\{n\in\gw\colon |t|<n\}$, and to each point $y\in\cantor$ assigns the infinite set $\{E(y)(y\restriction n)\colon n\in\gw\}$.
The function $F$ stands witness to the fact that the list-coloring number of the graph $G_1$ is uncountable.

To see this, if $f$ is a function on the space $Y$ which for each $y\in Y$ selects an element of $F(y)$, then there must be a point $y\in\cantor$ such that $E(y)=f\restriction\bintree$. But then, there must be $n\in\gw$ such that $f(y)=E(y)(y\restriction n)=f(y\restriction n)$. Since $y\restriction n$ is $G_1$-connected to $y$, this completes the proof.
\end{proof}

The equivalence of items (1) and (2) in Theorem~\ref{maintheorem} fails for graphs which are not definable, as the following example shows:

\begin{example}
There is a graph $G$ on $\gw_1$ such that the coloring number of $G$ is uncountable, while for every countable set $a\subset X$, the set $\{x\in X\colon\exists^\infty y\in a\ x\mathrel{G}y\}$ is countable.
\end{example}

\begin{proof}
For every countable limit ordinal $\ga$ choose a set $c_\ga\subset\ga$ which is cofinal in it and of ordertype $\gw$. The graph $G$ on $\gw_1$ consists of all pairs $\{\gb, \ga\}$ such that $\ga$ is a limit ordinal and $\gb\in c_\ga$.

First of all, if $a\subset\gw_1$ is a countable set and $\ga$ is any successor ordinal larger than $\sup(a)$, then no ordinal $\gb\in\gw_1$ is connected with infinitely many elements of $\ga$, verifying that the set $\{\gb\in\gw_1\colon\exists^\infty \gg\in a\ \gb\mathrel{G}\gg\}$ is countable. At the same time, the coloring number of the graph $G$ is uncountable: if $o$ is some orientation of $G$, consider the regressive function $f$ on $\gw_1$ which assigns to each limit ordinal $\ga$ an element of $c_\ga$ which does not belong to the $o$-outflow of $\ga$.
Apply Fodor's theorem to find a stationary set $S\subset\gw_1$ and an ordinal $\gb$ such that $f(\ga)=\gb$ for all $\ga\in S$. Now, if $\ga\in S$ is any ordinal which is not in the outflow of $\gb$, there is no way of orienting the edge $\{\gb, \ga\}\in G$ in a way consistent with the assumptions.
\end{proof}

It is immediate from Theorem~\ref{maintheorem} that the concept of countable coloring number is \pioneoneonsigmaoneone\ in the following sense \cite[Section 29.E]{kechris:classical}. If $X$ is a Polish space and $A\subset\baire\times X\times X$ is an analytic set whose vertical sections are symmetric and reflexive relations, then the set $\{y\in\baire\colon A_y$ has countable coloring number$\}$ is coanalytic. The final computation in this paper shows that this is an optimal complexity bound:

\begin{proposition}
The collection of closed graphs on $\cantor$ with countable coloring number is a complete coanalytic subset of the space $K((\cantor)^2)$.
\end{proposition}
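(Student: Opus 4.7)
The set of closed graphs on $\cantor$ with countable coloring number is $\Pi^1_1$ in $K((\cantor)^2)$ by Theorem~\ref{maintheorem}(3): having countable coloring number is equivalent to the nonexistence of a continuous injective homomorphism from $G_1$, which is a coanalytic condition on the graph. For $\Pi^1_1$-completeness the plan is to Borel-reduce the canonical complete coanalytic set $\mathrm{WF}$ of well-founded trees on $\gw$ to it.

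Given $T \in \mathrm{Tr}(\gw)$, I first amplify it by letting $T^* \subseteq (\gw \times 2)^{<\gw}$ be the tree of all sequences $\langle (n_i, a_i) : i < k\rangle$ with $\langle n_i : i < k\rangle \in T$ and $a_i \in 2$. Then $T^* \in \mathrm{WF}$ iff $T \in \mathrm{WF}$, and for ill-founded $T$, picking any $z \in [T]$, the map $w \mapsto \langle (z_i, w_i)\rangle_i$ is a continuous injection of $\cantor$ into $[T^*]$, so $[T^*]$ is uncountable. Next, work in the compact perfect zero-dimensional space $K = ((\gw \times 2) \cup \{\infty\})^\gw$, which is homeomorphic to $\cantor$. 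For $s \in (\gw \times 2)^{<\gw}$ let $p_s \in K$ be $s$ followed by infinitely many $\infty$'s, and set
\[
E_T = \{(p_s, y), (y, p_s) : s \in T^*,\ s \subset y,\ y \in [T^*]\} \subseteq K \times K.
\]
Define $G_T$ to be the closure of $E_T$ in $(K \times K) \setminus \Delta$; this is automatically a closed symmetric irreflexive relation, i.e.\ a closed graph on $K$. Fixing a homeomorphism $K \cong \cantor$, the map $T \mapsto G_T \in K((\cantor)^2)$ is Borel: this is a routine verification using the Effros Borel structure, since for each basic open $U \subset K \times K$ the condition $G_T \cap U \neq 0$ reduces to $E_T \cap U \neq 0$, which is Borel in $T$.

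For correctness of the reduction: if $T \in \mathrm{WF}$ then $[T^*] = 0$, so $E_T = 0$ and $G_T = 0$, which trivially has countable coloring number. If $T \notin \mathrm{WF}$, then the countable set $a = \{p_s : s \in (\gw \times 2)^{<\gw}\} \subseteq K$ has the property that every $y \in [T^*]$ is $E_T$-adjacent to $p_{y \restriction k}$ for all $k \in \gw$. Hence $[T^*] \subseteq \{x \in K : \exists^\infty a' \in a\ x \mathrel{G_T} a'\}$, which is therefore uncountable, and by Theorem~\ref{maintheorem}(2) the coloring number of $G_T$ is uncountable.

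The main subtlety I expect is that the closure operation $E_T \mapsto G_T$ might introduce spurious limit edges (from convergent sequences of anchor points $p_{s_n}$ where $|s_n|$ grows or the coordinates blow up to $\infty$). This is harmless on both sides: in the well-founded case $E_T$ is empty so its closure remains empty, and in the ill-founded case the argument for uncountable coloring uses only edges already present in $E_T \subseteq G_T$, so additional limit edges in $G_T \setminus E_T$ can only enlarge the witness set, preserving the failure of Theorem~\ref{maintheorem}(2).
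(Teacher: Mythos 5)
Your upper bound and your intended use of Theorem~\ref{maintheorem}(2) are in the right spirit, but the reduction has a fatal flaw: the map $T\mapsto G_T$ is not Borel. The edge set $E_T$ is defined by reference to $[T^*]$, so deciding whether $E_T$ meets a basic open box $U_1\times U_2$ amounts to deciding whether $T^*$ has an infinite branch passing through a prescribed finite node and entering $U_2$ --- a $\mathbf{\gS}^1_1$-complete condition on $T$, not a Borel one. Your parenthetical claim that ``$E_T\cap U\neq 0$ is Borel in $T$'' is exactly where the ill-foundedness problem gets smuggled into the construction of the instance. In fact no repair keeping your scheme can work: you arrange $G_T=0$ precisely when $T$ is well-founded, and $\{0\}$ is a (cl)open singleton of $K((\cantor)^2)$, so a Borel map with that property would exhibit $\mathrm{WF}$ as a Borel preimage of a Borel set, which is impossible. (A secondary issue: for ill-founded $T$ your anchor points satisfy $p_{y\restriction n}\to y$, so the closure of $E_T$ in $K\times K$ meets the diagonal and $G_T$ is not an element of $K((\cantor)^2)$ at all; this would also need to be addressed.)

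The paper's construction is designed precisely to dodge this. It works on the space $X=\bigcup_n 2^n\times 2^n\cup(\cantor\times\cantor)$ and, for a binary tree $T$, connects a finite node $\langle t,s\rangle$ with $t\in T$ ending in $1$ to \emph{every} infinite pair $\langle u,v\rangle$ extending it --- membership of an edge in a basic open set then depends on finitely much of $T$, so $T\mapsto G_T$ is continuous. The dichotomy is not keyed to well-foundedness but to the $\mathbf{\gS}^1_1$-complete set of binary trees having a branch with infinitely many unit entries: if such a branch $x$ exists, every $\langle x,y\rangle$ is connected to infinitely many members of the countable set of finite nodes, killing condition (2) of Theorem~\ref{maintheorem}; if not, orienting edges from infinite pairs toward finite nodes gives finite outflows. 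If you want to salvage your version, the analogous fix is to connect $p_s$ (for $s\in T^*$) to all $y\in K$ extending $s$, not only to branches of $T^*$, and to verify the coloring-number computation and closedness for that larger edge set.
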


\begin{proof}
It is enough to work with any zero-dimensional compact space in place of $\cantor$ as such a space is homeomorphic to a closed subspace of $\cantor$. Let $X=\bigcup_n 2^n\times 2^n\cup (\cantor\times\cantor)$ and its topology is generated by sets $O_{t,s}$ where for some $n\in\gw$ $t, s\in 2^n$ and $O_{t, s}=\{\langle u, v\rangle\in X\colon t\subset u, s\subset v\}$. For every tree $T\subset\bintree$ consider the graph $G_T$ connecting $\langle t, s\rangle\in X$ with $\langle u, v\rangle\in\cantor\times\cantor$ if $t\in T$, the last entry on $t$ is $1$, and $t\subset u, s\subset v$. It is not difficult to check that the map $T\mapsto G_T$ is a continuous map from the space of all trees to $K(X^2)$. I claim that the coloring number of $G_T$ is countable just in case the tree $T$ has no infinite branch with infinitely many unit entries. This will complete the proof of the theorem since the set of binary trees which have an infinite branch with infinitely many unit entries is a complete analytic set \cite[Exercise 27.3]{kechris:classical}.

Indeed, if $T$ has no infinite branch with infinitely many unit entries, then orient edges in $G_T$ so that they point from elements of $\cantor\times\cantor$ to the pairs of finite binary sequences connected with them. The lack of infinite paths in $T$ shows that the outflow of every node in this orientation of the graph $G_T$ is finite, and so the coloring number of $G_T$ is countable. On the other hand, if $x\in\cantor$ is an infinite path through the tree $T$ which contains infinitely many units, then every point of the form $\langle x, y\rangle$ for $y\in\cantor$ is connected with infinitely many nodes in the countable collection $\bigcup_n 2^n\times 2^n$, showing that the coloring number of the graph $G_T$ is uncountable by Theorem~\ref{maintheorem}.
\end{proof}

\bibliographystyle{plain} 
\bibliography{odkazy,zapletal}

\end{document}